\newtheorem{thm}{Theorem}[section]
\newtheorem{lemma}[thm]{Lemma}
\newtheorem{prop}[thm]{Proposition}
\theoremstyle{definition}
\newtheorem{definition}[thm]{Definition}
\newtheorem{remark}[thm]{Remark}  
\newtheorem{example}[thm]{Example}  
\numberwithin{equation}{section}
\author{Seyed Ruhallah Ahmadi and Bruce Gilligan}
\address{
Department of Mathematics and Statistics, University of Regina,
Regina, Canada S4S 0A2}
\email{seyed.ruhala.ahmadi@gmail.com and gilligan@math.uregina.com}
\keywords{homogeneous K\"{a}hler manifold, non-compact dimension two}
\subjclass{Primary 32M10; Secondary 32Q15}   
\date{\today}   
\begin{document}

\title[Classification of K\"{a}hler $G/H$ with $d_{G/H}=2$]{Classification of  K\"{a}hler homogeneous  \\  
manifolds of non--compact dimension two}

\begin{abstract}    
Suppose $G$ is a connected complex Lie group and $H$ is a closed complex 
subgroup such that $X := G/H$ is K\"{a}hler and   
the codimension of the top non--vanishing homology group of $X$ with 
coefficients in $\mathbb Z_2$ is less than or equal to two.  
We show that $X$ is biholomorphic to a complex homogeneous manifold constructed using well--known   
``basic building blocks'', i.e., $\mathbb C$, $\mathbb C^*$, Cousin groups, and flag manifolds.    
For $H$ discrete the classification was presented in the first author's dissertation \cite{Ahm13}.        
\end{abstract}  

\maketitle

\section{Introduction}   \label{sect1}  

In this paper we consider complex homogeneous manifolds of the form $G/H$,   
where $G$ is a connected complex Lie group and $H$ is a closed complex subgroup of $G$.    
The existence of complex analytic objects on such a $G/H$, like non--constant holomorphic functions, 
plurisubharmonic functions and analytic hypersurfaces, is related to when $G/H$ could be K\"{a}hler.  
So the first question one might consider concerns the existence of K\"{a}hler structures and we restrict 
ourselves to that question here.       
The structure of compact K\"{a}hler homogeneous manifolds is now classical \cite{Mat57} and \cite{BR62}  
and the structure in the case of $G$--invariant metrics is also known \cite{DN88}.  
As a consequence, our investigations here concern non--compact complex homogeneous manifolds 
having a K\"{a}hler metric that is not necessarily $G$--invariant.     

\medskip   
Some results are known under restrictions on the type of group $G$ that is acting.      
The base of the holomorphic reduction of any complex solvmanifold is always Stein \cite{HO86}, where the proof uses 
some fundamental ideas in  \cite{Loe85}.  
For  $G$ a solvable complex Lie group and $G/H$ K\"{a}hler the fiber of the holomorphic reduction   
of $G/H$ is a Cousin group, see \cite{OR88} 
and the holomorphic reduction of a finite covering of $G/H$ is a principal Cousin group bundle, see \cite{GO08}.    
If $G$ is semisimple, then $G/H$ admits a K\"{a}hler structure if and only if $H$ is algebraic \cite{BO88}.     
For $G$ reductive there is the characterization that $G/H$ is K\"{a}hler if and only if $S\cdot H$ is closed in $G$    
and $S\cap H$ is an algebraic subgroup of $S$, a maximal semisimple subgroup of $G$, see \cite[Theorem 5.1]{GMO11}.          
There is also a result if $G$ is the direct product of its radical and a maximal semisimple subgroup    
under some additional assumptions on the isotropy subgroup and so on the structure of $X$ \cite{OR87}.         

\bigskip  
One way to proceed is to impose some topological restraints on $X$.   
In \cite{GOR89} we classified K\"{a}hler homogeneous manifolds  $X$ having more than one end       
by showing $X$ is either a product of a Cousin group and a flag manifold 
or $X$ admits a homogeneous fibration as a  $\mathbb C^*$--bundle over the 
product of a compact complex torus and a flag manifold.       
Now in the setting of proper actions of Lie groups Abels introduced the notion of non--compact dimension, 
see \cite{Abe76} and  \cite[\S 2]{Abe82}.        
We do not wish to assume that our Lie group actions are necessarily proper ones, so we take a  
dual approach and define the non--compact dimension $d_X$ of a connected smooth manifold to 
be the codimension of its top non--vanishing homology group with coefficients in $\mathbb Z_2$, see \S 2.   
Our goal  in this paper is to classify K\"{a}hler homogeneous manifolds $G/H$ with $d_{G/H} = 2$.   
All such spaces are holomorphic fiber bundles where the fibers and the bases of the bundles involved consist of    
Cousin groups, flag manifolds, $\mathbb C$, and $\mathbb C^*$.      
We now present the statement of our main result, where $T$ denotes a compact complex torus, $C$ a Cousin group, and $Q$ a flag manifold.

\begin{thm}  [Main Theorem]  
Suppose $X := G/H$ is K\"{a}hler, where $G$ is a connected complex Lie group 
and $H$ is  a closed complex subgroup of $G$.       
Then  $d_X = 2$ if and only if $X$ is one of the following:   

\medskip\noindent {\bf Case I:  $H$ discrete:}   
A finite covering of $X$ is biholomorphic to a product $ C \times A$, 
with $C$ a Cousin group, $A$ a Stein Abelian Lie group and $d_C + d_A = 2$.      

\medskip \noindent  
\noindent {\bf Case II:  $H$ is not discrete:}    
\begin{enumerate}   
\item Suppose ${\mathcal O}(X) = \mathbb C$ and let  $G/H \to G/N$ be its normalizer fibration.
\subitem (a)  $X$ is a $(\mathbb C^*)^k$--bundle over $C \times Q$ with $d_C + k = d_X = 2$.  
\subitem (b)  $X$ is $T \times G/N$ with ${\mathcal O}(G/N)=\mathbb C$ and   
$G/N$ fibers as a $\mathbb C$--bundle over a flag manifold;  
there are two subcases depending on whether $S$ acts transitively on $G/N$ or not.  
\item    Suppose ${\mathcal O}(X) \not= \mathbb C$ and let $G/H \to G/J$ be its holomorphic reduction.   
\subitem (a)  $G/J$ is Stein with $d_{G/J} = 2$ and $X = T \times Q \times G/J$, where   
\subsubitem (i) $G/J = \mathbb C$.  
\subsubitem (ii) $G/J$ is the 2--dimensional affine quadric
\subsubitem (iii) $G/J$ is the complement of a quadric curve in $\mathbb P_2$    
\subsubitem (iv)   $G/J =  (\mathbb C^*)^2$  
\subitem (b) $G/J$ is not Stein with $d_{G/J} = 2$; then   
$G/J$ is a $\mathbb C^*$--bundle over an affine cone minus its vertex and  $X = T \times G/J$.      
\subitem (c)  $d_{G/J} = 1$ with $G/J$ an affine cone minus its vertex and $d_{J/H} = 1$  
\subsubitem (i)  ${\mathcal O}(J/H) = \mathbb C$:  $X$ is a $\mathbb C^*$--bundle over $T\times Y$, where 
$Y$ is a flag manifold bundle over the affine cone minus its vertex.       
\subsubitem (ii)   ${\mathcal O}(J/H) \not= \mathbb C$:  this case does not occur.  
\end{enumerate}  
\end{thm}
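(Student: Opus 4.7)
The plan is to split on the dichotomy $H$ discrete / $H$ not discrete, and within Case II to branch on whether $\mathcal{O}(X) = \mathbb{C}$, using the normalizer fibration $G/H \to G/N$ in the first subcase and the holomorphic reduction $G/H \to G/J$ in the second. Case I is already settled in \cite{Ahm13}, so the real work is Case II. Throughout, the additivity $d_X = d_{\text{base}} + d_{\text{fiber}}$ for the relevant fibrations, combined with $d_X = 2$, leaves only a small number of numerical possibilities that must be matched against the building-block classification.

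For Case II with $\mathcal{O}(X) = \mathbb{C}$, I would pass to the normalizer fibration $G/H \to G/N$. The fiber $N^{0}/(H\cap N^{0})$ is a connected complex abelian Lie group whose Kähler structure is controlled by \cite{OR88} and \cite{GO08}, so it is a product of a Cousin group with copies of $\mathbb{C}^{*}$ (copies of $\mathbb{C}$ are excluded by $\mathcal{O}(X)=\mathbb{C}$), while $G/N$ sits inside a flag manifold via the Tits fibration. Matching the invariants across $d_X = 2$ and applying \cite[Theorem 5.1]{GMO11} to handle the semisimple part produces either the $(\mathbb{C}^{*})^k$-bundle of (a) or, when the torus factors out of the radical action, the split product $T\times G/N$ of (b), with the two subcases of (b) distinguished by whether $S$ acts transitively on $G/N$.

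For Case II with $\mathcal{O}(X) \not= \mathbb{C}$, I would instead invoke the holomorphic reduction $G/H \to G/J$. By \cite{HO86} the base is Stein when $G$ is solvable, and the reductive case is handled by \cite{BO88} and \cite[Theorem 5.1]{GMO11}; combining these one obtains holomorphic separability of $G/J$ in general, although not always Stein-ness, which is exactly the split between (a) and (b). In branch (a) the problem reduces to classifying Stein Kähler homogeneous manifolds with $d = 2$, and the enumeration (i)--(iv) follows from the affine-algebraic structure available in that setting together with the classification of low-dimensional Stein homogeneous spaces. In branch (b) the failure of Stein-ness together with the Kähler hypothesis forces a $\mathbb{C}^{*}$-bundle over an affine cone minus its vertex, and the torus $T$ splits off by the same Cousin-pullback argument as in (b) of part (1). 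Branch (c) corresponds to $d_{G/J}=1$, $d_{J/H}=1$; here one uses the Kähler metric on the fiber $J/H$ and the prior results from \cite{GOR89} (applied to manifolds with more than one end) to straighten it to a $\mathbb{C}^{*}$-bundle over a torus times a flag bundle.

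The main obstacle I foresee is branch (c), particularly ruling out $\mathcal{O}(J/H) \not= \mathbb{C}$ in (c)(ii): a non-constant holomorphic function on the fiber would, via the composition with the affine-cone base, either contradict $d_{G/J} = 1$ or violate the Stein/Kähler compatibility of the whole tower. A secondary difficulty is verifying that the various product decompositions $T \times \cdots$ actually split globally rather than only up to a finite covering; this will require showing that Cousin-group and torus fibers pull back trivially over Stein or flag bases, which is where the Kähler hypothesis on $X$ (as opposed to merely on the fibers) does essential work.
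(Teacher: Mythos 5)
Your high-level architecture matches the paper's: discrete versus non-discrete isotropy, then the normalizer fibration when $\mathcal{O}(X)=\mathbb{C}$ and the holomorphic reduction when $\mathcal{O}(X)\neq\mathbb{C}$, with the Fibration Lemma controlling the numerics. But several of the steps you treat as routine are exactly where the paper has to work hardest, and as stated some of them would fail. First, you repeatedly let holomorphic data on a fiber propagate to $X$ for free. The claim that ``copies of $\mathbb{C}$ are excluded by $\mathcal{O}(X)=\mathbb{C}$'' is not automatic: holomorphic functions on the fiber $N/H$ of the normalizer fibration do not extend to $X$. The paper spends a full page ruling out $\mathcal{O}(N/H)\neq\mathbb{C}$ when $G/N$ is compact, by building an intermediate fibration $G/H\to G/I$, showing the $S$--orbits are sections (via \cite[Proposition 1, \S 5.2]{AHR85}), and only then concluding $\mathcal{O}(G/I)\neq\mathbb{C}$. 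Relatedly, the $(\mathbb{C}^*)^k$ in Case II(1)(a) does not come from $\mathbb{C}^*$ factors of the fiber: the fiber there is a genuine Cousin group, and the $(\mathbb{C}^*)^k$ arises from the orbits of the center of the Levi factor of the parabolic acting on that Cousin fiber (Proposition \ref{Cousinoverflag}). Second, your plan to get the global product decompositions by noting that ``Cousin-group and torus fibers pull back trivially over Stein or flag bases'' breaks down for the base $(\mathbb{C}^*)^2$, which is Stein but not simply connected, so Grauert's Oka principle gives no triviality. This is precisely the content of Propositions \ref{torbdle} and \ref{solvcase}, the most delicate arguments in the paper, which require the Hochschild--Mostow hull, the CRS machinery of \cite{GO08}, and an explicit computation showing the relevant element $\gamma_A$ acts trivially on the nilradical.

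Two further gaps. In Case II(2) you propose to derive the list (i)--(iv) and cases (b), (c) from ``the affine-algebraic structure available in that setting,'' but $G$ is an arbitrary connected complex Lie group and $G/J$ is not a priori an algebraic homogeneous space, so Theorem \ref{alggps} does not apply directly; the actual input is the Main Result of \cite{AG94}, which classifies the possible holomorphically separable bases with $d\le 2$ (the same list drives Proposition \ref{solvgp}). Likewise in Case II(1) one must first show that $G'$ acts transitively and algebraically on $G/N$ (via the commutator fibration and \cite{HO81}) before any parabolic subgroup enters. Finally, your treatment of (c)(ii) identifies the right difficulty but the proposed contradiction (``contradict $d_{G/J}=1$ or violate Stein/K\"ahler compatibility'') is not the operative one: the paper splits $J/H$ as $T\times Z$ using \cite[Proposition 3]{GOR89}, produces $G/H\to G/I$ with compact fiber $T$, shows $G/I$ is K\"ahler with $d_{G/I}=2$ and realizes it as a $\mathbb{C}^*$--bundle over an affine cone minus its vertex, hence holomorphically separable --- contradicting the maximality of $G/J$ as the base of the holomorphic reduction. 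You would also need to supply the two-orbit linearization argument for Case II(1)(b) when $S$ is not transitive on $G/N$, which your sketch does not address at all.
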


The paper is organized as follows.  
In section two we gather a number of technical tools.   
In particular, we note that Proposition \ref{Cousinoverflag} deals with the setting where the fiber of the normalizer fibration is   
a Cousin group and its base is flag manifold.  
It is essential for Case II (1) (a) in the Main Theorem and can be used to simplify the proof when $d_X = 1$, see Remark \ref{BRGOR}.       
Section three is devoted to the case when the isotropy subgroup is discrete.      
Sections four and five deal with general isotropy and contain the proof of the main result when there are no non--constant 
holomorphic functions and when there are non--constant holomorphic functions, respectively.  
In the last section we present some examples.  % that illuminate some of the spaces that can occur.   

\medskip   
\thanks{This work was partially supported by an NSERC Discovery Grant.}
\thanks{We thank Prof. A. T. Huckleberry for his comments that led to improvements in the work.  
We also thank the Arbeitsgruppe Transformationsgruppen at the Ruhr--Universit\"{a}t Bochum for 
their kind hospitality during a sojourn at which time part of this paper was written.}

\section{Technical Tools}  

The purpose of this section is to gather together a number of definitions and basic tools that are subsequently needed.   
\subsection{Basic Notions}  

\begin{definition}  
A {\it Cousin group} is a complex Lie group $G$ with ${\mathcal O}(G) = \mathbb C$.  
The terminology {\it toroidal group} is also found in the literature.  
Every Cousin group is Abelian and is the quotient of $\mathbb C^n$ by a discrete subgroup having rank $n+k$ 
for some $k$ with $1 \le k \le n$.  
For details, we refer the reader to \cite{AK01}.       
\end{definition}  

\begin{definition}  
A {\it flag manifold} (the terminology {\it homogeneous rational manifold} is also in common usage)   
is a homogeneous manifold of the form $S/P$, where $S$ is a connected semisimple complex Lie group and $P$ is a parabolic 
subgroup of $S$.  
One source concerning the structure of flag manifolds is \cite[\S 3.1]{Akh95}.   
\end{definition}   

\begin{definition}  
For $X$ a connected (real) smooth manifold we define 
\[  
    d_{X} \; := \; \dim_{\mathbb R} X \; - \; 
    \min \{ \ r \ | \ H_{k}(X,\mathbb Z_2) \ = \ 0 \quad \forall \quad k \ > \ r  \ \} ,  
\]    
i.e., $d_{X}$ is the codimension of the top non--vanishing homology group of $X$ 
with coefficients in $\mathbb Z_2$.    
We call $d_X$ the {\it non--compact dimension} of $X$.    
\end{definition}   

\begin{prop}  \label{SerreStein}   
Suppose $X$ is a connected Stein manifold.  
Then  $\dim_{\mathbb C} X \le d_{X}$.
\end{prop}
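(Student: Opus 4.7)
The plan is to invoke the classical Andreotti--Frankel theorem: a Stein manifold of complex dimension $n$ has the homotopy type of a CW--complex of real dimension at most $n$. Granting this, the definition of $d_X$ gives the result immediately. Indeed, setting $n := \dim_{\mathbb C} X$ so that $\dim_{\mathbb R} X = 2n$, the theorem implies $H_k(X, \mathbb Z_2) = 0$ for every $k > n$. Hence the minimum $r$ appearing in the definition of $d_X$ satisfies $r \le n$, and therefore
\[
    d_X \;=\; \dim_{\mathbb R} X - r \;\ge\; 2n - n \;=\; n \;=\; \dim_{\mathbb C} X,
\]
which is what we want.

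To make the proof self-contained one would include a sketch of Andreotti--Frankel. The idea is to pick a smooth strictly plurisubharmonic exhaustion function $\varphi : X \to \mathbb R_{\ge 0}$ (which exists by Stein-ness), and, after a small perturbation, assume $\varphi$ is Morse. At any critical point $p$ of $\varphi$, write the real Hessian in terms of a unitary frame for $T_pX$; the strict plurisubharmonicity of $\varphi$ forces the complex Hessian $(\partial^2 \varphi / \partial z_i \partial \bar z_j)$ to be positive definite Hermitian, which implies that in each complex coordinate direction the two corresponding real directions cannot both contribute negatively to the Hessian. Consequently the Morse index of every critical point is at most $n$. Standard Morse theory then builds $X$ up (via the sublevel sets $\{\varphi \le c\}$) out of cells of real dimension $\le n$, giving the claimed CW structure.

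The only real step in the argument, and the one that would take some care to write out in full, is the Morse--index bound at critical points; everything else is formal. Since Andreotti--Frankel is standard and well documented (for instance in Milnor's \emph{Morse Theory}), in the context of this paper one would simply cite it and derive the numerical inequality above.
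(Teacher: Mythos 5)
Your argument is correct and takes essentially the same route as the paper: both reduce the claim to the vanishing $H_k(X,\mathbb Z_2)=0$ for all $k>\dim_{\mathbb C}X$ and then read off the inequality $d_X=\dim_{\mathbb R}X-r\ge 2n-n=n$ from the definition of $d_X$. The only difference is the source of that vanishing --- the paper cites Serre's 1953 result directly, while you derive the (stronger) statement that $X$ has the homotopy type of an at most $n$--dimensional CW--complex from the Andreotti--Frankel theorem via Morse theory; either citation is standard and suffices here.
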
   

\begin{proof}  For $X$ Stein one has $H_k(X) = 0$ for all $k > \dim_{\mathbb C} X$ by \cite{Ser53} .  \end{proof}  

\subsection{Fibration Methods}   
Throughout we make use of a number of fibrations that are now classical.     
\begin{enumerate} 
\item {\bf Normalizer Fibration:}  Given $G/H$ let $N = N_G(H^0)$ be the normalizer in $G$ of 
the connected component of the identity $H^0$ of $H$.  
Since $H$ normalizes $H^0$, we have $H \subset N$ and the {\it normalizer fibration} $G/H \to G/N$.   
\item {\bf Holomorphic Reduction:}  Given $G/H$ we set 
$ J := \{ \ g \in G \ | \  f(gH) \; = \; f(eH) \mbox{ for all }  f \in {\mathcal O}(G/H) \ \} $.  
Then $J$ is a closed complex subgroup of $G$ containing $H$ and we 
call the fibration $p:G/H \to G/J$ the {\it holomorphic reduction} of $G/H$.   
By construction $G/J$ is holomorphically separable and  
 ${\mathcal O}(G/H) \cong p^*({\mathcal O}( G/J))$.   
\end{enumerate}

Suppose a manifold $X$ admits a locally trivial fiber bundle $ X \stackrel{F}{\to} B$ with $F$ and $ B$ connected smooth manifolds.   
One would then like to know how $d_F$ and $d_B$ are related to $d_X$ whenever possible.    
The following result was proved in \S 2 in \cite{AG94} using spectral sequences.          

\begin{lemma} [The Fibration Lemma]  
Suppose $ X \stackrel{F}{\to} B$ is a locally trivial fiber bundle with $X,F,B$ smooth manifolds.  
Then    
\begin{enumerate}  
\item if the bundle is orientable (e.g., if $\pi_1(B) = 0$), then $d_X = d_F + d_B$.    
\item if $B$ has the homotopy type of a $q$--dimensional CW complex, then 
$d_X \ge d_F + (\dim B - q)$.    
\item if $B$ is homotopy equivalent to a compact manifold, then $d_X \ge d_F + d_B$.  
\end{enumerate}  
\end{lemma}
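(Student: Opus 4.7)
The plan is to apply the Leray--Serre homology spectral sequence with $\mathbb Z_2$ coefficients,
\[
E^2_{p,q} \; = \; H_p(B;\mathcal H_q(F;\mathbb Z_2)) \; \Longrightarrow \; H_{p+q}(X;\mathbb Z_2),
\]
where $\mathcal H_q(F;\mathbb Z_2)$ is the local coefficient system on $B$ coming from the monodromy action of $\pi_1(B)$ on $H_q(F;\mathbb Z_2)$. Throughout, write $h(Y) := \max\{k : H_k(Y;\mathbb Z_2) \ne 0\}$, so that $d_Y = \dim_{\mathbb R} Y - h(Y)$, and note that $\dim X = \dim F + \dim B$ in a bundle. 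The whole argument reduces to controlling $h(X)$ in terms of $h(F)$ and $h(B)$.

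For (1), orientability of the bundle makes the local system trivial, so over the field $\mathbb Z_2$ the K\"unneth formula gives $E^2_{p,q} = H_p(B;\mathbb Z_2) \otimes H_q(F;\mathbb Z_2)$. The entry at $(h(B), h(F))$ is non--zero, and every differential $d_r$ into or out of it lands in a region where $E^2 = 0$ (either at $p > h(B)$ or at $q > h(F)$), so it survives to $E^\infty$ and forces $H_{h(F)+h(B)}(X;\mathbb Z_2) \ne 0$. Conversely $E^2_{p,q} = 0$ whenever $p+q > h(F)+h(B)$, so $H_n(X;\mathbb Z_2)=0$ for such $n$. Hence $h(X) = h(F) + h(B)$ and $d_X = d_F + d_B$.

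For (2), even when the local system is twisted $H_p(B;\mathcal L)$ vanishes whenever $p$ exceeds the CW dimension $q$ of $B$, so $E^2_{p,q'} = 0$ for $p > q$, yielding $h(X) \le q + h(F)$ and therefore $d_X \ge d_F + (\dim B - q)$. For (3), if $B$ is homotopy equivalent to a closed manifold $M$ of dimension $m$, then $M$ and hence $B$ has the homotopy type of a CW complex of dimension $m$, so applying (2) with $q = m$ gives $d_X \ge d_F + (\dim B - m)$; since a connected closed manifold is $\mathbb Z_2$--orientable, mod--$2$ Poincar\'e duality yields $h(M) = m$, whence $h(B) = m$ and $d_B = \dim B - m$, and the inequality becomes $d_X \ge d_F + d_B$.

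The main obstacle is the asymmetry forced by orientability. In (1) the top--right corner $(h(B), h(F))$ of the $E^2$ page survives and produces a non--zero class in the prescribed total degree, delivering the equality; in (2) and (3) the monodromy representation $\pi_1(B) \to \operatorname{Aut}\bigl(H_*(F;\mathbb Z_2)\bigr)$ may be non--trivial, the K\"unneth identification fails, the corner entry can collapse, and one is left only with the upper bound $h(X) \le h(F) + (\text{CW dim of } B)$. The routine steps are the shape--of--the--differential arguments in the spectral sequence and the identification $h(M) = m$ for a closed manifold via Poincar\'e duality modulo $2$.
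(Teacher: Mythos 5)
Your argument is correct and is essentially the paper's approach: the lemma is quoted verbatim from \cite{AG94}, where it is proved by exactly this Leray--Serre spectral sequence analysis with $\mathbb Z_2$ coefficients (survival of the corner entry $E^2_{h(B),h(F)}$ in the orientable case, and vanishing of $E^2_{p,q}$ for $p$ above the CW dimension of the base for the two estimates). The only reading you should make explicit is that ``compact manifold'' in part (3) means \emph{closed} manifold --- as it does in all of the paper's applications, namely orbits of maximal compact subgroups and compact solvmanifolds --- so that mod--$2$ Poincar\'e duality gives $h(M)=\dim M$ and the inequality from (2) becomes $d_X \ge d_F + d_B$.
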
  

\begin{remark}    
If $B$ is homogeneous, then one knows that $B$ is homotopy equivalent to a compact manifold if:        
\begin{enumerate} 
\item the isotropy subgroup of $B$ has finitely many connected components \cite{Mos55}; 
e.g., in an algebraic setting,       
\item if $B$ is a solvmanifold \cite{Mos54}; indeed, every solvmanifold is a vector 
bundle over a compact solvmanifold \cite{AT70}.  
\end{enumerate}      
\end{remark}

\subsection{Special case of a question of Akhiezer}   

Later we will need a result that is based on \cite[Lemma 8]{AG94}.  
Since that Lemma was stated in a way suitable for its particular application in \cite{AG94}, 
we reformulate it in a form suitable for the present context.       

\begin{lemma} [Lemma 8 \cite{AG94}] \label{lem8}   
Let $G$ be a connected, simply connected complex Lie group with Levi-Malcev decomposition $G=S\ltimes R$ with 
$\dim_{\mathbb C} R=2$ and $\Gamma$ a discrete subgroup of $G$ such that $X=G/\Gamma$ is K\"{a}hler.    
Then $\Gamma$ is contained in a subgroup of $G$    
of the form $A \ltimes R$, where $A$ is a proper algebraic subgroup of $S$.   
\end{lemma}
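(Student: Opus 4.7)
The plan is to argue by contradiction, projecting $\Gamma$ to $S$ and showing its image cannot be Zariski dense. Let $\pi \colon G \to G/R \cong S$ denote the canonical projection and set $\Lambda := \pi(\Gamma)$. Since $\pi^{-1}(A) = A \ltimes R$ for any subgroup $A \subset S$, showing $\Gamma \subset A \ltimes R$ for some proper algebraic $A$ is equivalent to showing the Zariski closure $\overline{\Lambda}^{\mathrm{Zar}}$ is a proper (algebraic) subgroup of $S$. So assume for contradiction that $\overline{\Lambda}^{\mathrm{Zar}} = S$.

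Next I would extract structural information from $\Gamma_R := \Gamma \cap R$, a discrete subgroup of the $2$--dimensional complex solvable group $R$. Because $R$ is normal in $G$, conjugation by $\gamma \in \Gamma$ preserves $\Gamma_R$, and this conjugation action factors through the representation $\phi \colon S \to \mathrm{Aut}(R)$ built into the semidirect product. Let $V$ denote the Zariski closure of $\Gamma_R$ in $R$; this is a complex algebraic subgroup. By construction $\phi(\Lambda)$ preserves $V$, and the condition on elements of $S$ to preserve $V$ cuts out a Zariski--closed subgroup. By Zariski density of $\Lambda$, this subgroup is all of $S$, so $V$ is $\phi(S)$--invariant.

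The remainder of the argument proceeds by case analysis on $\dim_{\mathbb{C}} V \in \{0,1,2\}$, the possibilities being severely constrained by the fact that $S$ is semisimple acting complex--linearly on a $2$--dimensional complex solvable group. In each case the plan is to construct from $G/\Gamma$ a holomorphic fibration whose base is effectively a semisimple quotient $S/\Lambda'$ in which $\Lambda'$ is still Zariski dense in $S$, and then to invoke \cite{BO88} (a K\"{a}hler semisimple homogeneous space must have algebraic isotropy) to conclude that $\Lambda'$ must be algebraic, contradicting Zariski density.

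The main obstacle is that the naive intermediate subgroup $R\Gamma$ is generally not closed in $G$, so the formal fibration $G/\Gamma \to S/\Lambda$ does not make sense as a map of manifolds. One must instead pass through the holomorphic reduction of $G/\Gamma$, or work with the closed intermediate quotients obtained by dividing by $V\Gamma$ when $V \neq 0$, verifying in each case that the K\"{a}hler hypothesis descends. The subcase $V = R$ with $\Gamma_R$ a rank--$4$ lattice is the most delicate, since then $R/\Gamma_R$ is a compact complex torus and one must carefully track how the induced action of $\Lambda$ on this torus bundle produces the required semisimple quotient on which \cite{BO88} can be applied.
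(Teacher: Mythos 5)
First, a point of comparison: the paper does not actually prove this statement --- it is imported, in reformulated wording, from \cite[Lemma 8]{AG94} --- so there is no internal proof to measure your argument against; I can only assess the outline on its own terms.

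Your opening reduction is correct: $\Gamma\subset A\ltimes R$ for a proper algebraic $A$ is equivalent to the Zariski closure of $\Lambda:=\pi(\Gamma)$ being a proper subgroup of $S$, and the Zariski closure $V$ of $\Gamma\cap R$ is $\phi(S)$--invariant once $\Lambda$ is assumed dense (at least when $R$ is abelian; if $R$ is the non--abelian two--dimensional group its automorphism group is solvable, so $S$ acts trivially, $G$ is a direct product, and that case needs a separate argument, e.g.\ via \cite{OR87}). The problem is that the proposal stops exactly where the proof has to begin. The whole content of the lemma is in the case analysis you defer, and the mechanism you announce for closing it does not obviously work: \cite{BO88} says a K\"ahler orbit of a semisimple group has algebraic isotropy, but applied to the $S$--orbit $S/S\cap\Gamma$ inside $X$ it only gives that $S\cap\Gamma$ is finite, which says nothing about $\Lambda=\pi(\Gamma)$ and does not contradict its Zariski density. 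To contradict density you must put a K\"ahler structure on (a space covered by) $S/\Lambda$ itself; Blanchard's argument \cite{Bla56} does this precisely when the fiber $R/\Gamma\cap R$ of $G/\Gamma\to G/R\cdot\Gamma=S/\Lambda$ is compact, i.e.\ in the rank--four case you single out as ``most delicate'' --- which is in fact the easy case for this strategy. When $\Gamma\cap R$ has rank at most three the fiber is non--compact, the metric does not push down, and you offer no substitute; that is a genuine gap. (Incidentally, your stated ``main obstacle'' --- that $R\cdot\Gamma$ might fail to be closed --- is not one: for K\"ahler $G/\Gamma$ the radical orbits are closed, see \cite{Gil04}, a fact this paper itself invokes in Theorem \ref{AG942} and Lemma \ref{disone}, so $\Lambda$ is discrete and the fibration exists. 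Knowing this, however, still does not complete your argument.)
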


This has the following consequence which we use later.  

\begin{thm}   \label{AG942} 
Suppose $G$ is a connected, simply connected, complex Lie group with Levi-Malcev decomposition $G=S\ltimes R$ with 
$\dim_{\mathbb C} R=2$. 
Let $\Gamma$ be a discrete subgroup of $G$ such that $X=G/\Gamma$ is K\"{a}hler,   
$\Gamma$ is not contained in a proper parabolic subgroup of $G$ and $\mathcal{O}(G/\Gamma) 
\simeq \mathbb C$.       
Then  $S=\{e\}$, i.e., $G$ is solvable.   
\end{thm}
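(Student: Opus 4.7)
The plan is to argue by contradiction: suppose $S \ne \{e\}$ and deduce that $\Gamma$ is then forced into a proper parabolic subgroup of $G$, violating the hypothesis.

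First, since $G$ is connected and simply connected, $\dim_{\mathbb C} R = 2$, and $G/\Gamma$ is K\"{a}hler, the hypotheses of Lemma~\ref{lem8} are in force. Applying it yields a proper algebraic subgroup $A \subsetneq S$ with $\Gamma \subset A \ltimes R$ (a well-defined closed complex subgroup of $G$, since $R$ is normal in $G$ and $A \cap R = \{e\}$). I would then consider the resulting holomorphic fibration
\[
\pi \colon G/\Gamma \longrightarrow G/(A \ltimes R) \;\cong\; S/A,
\]
the last identification coming from the normality of $R$ in $G$. Because $\pi$ is surjective, $\pi^{\ast}$ injects $\mathcal{O}(S/A)$ into $\mathcal{O}(G/\Gamma) = \mathbb{C}$, forcing $\mathcal{O}(S/A) = \mathbb{C}$.

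The step I expect to be the main obstacle is invoking the classical fact that for a connected complex semisimple Lie group $S$ and an algebraic subgroup $A \subset S$, the condition $\mathcal{O}(S/A) = \mathbb{C}$ forces $A$ to be a parabolic subgroup of $S$. Heuristically, a Chevalley embedding realizes $S/A$ as a smooth quasi-projective variety; if $A$ were not parabolic then $S/A$ would sit strictly inside its projective closure, and rational functions on the closure with poles only on the boundary would restrict to non-constant holomorphic functions on $S/A$, contradicting $\mathcal{O}(S/A) = \mathbb{C}$. References for this and related structure theorems for homogeneous spaces of complex semisimple Lie groups are found in \cite{Akh95}. Granting this fact, $A$ is a proper parabolic subgroup of $S$, whence $A \ltimes R$ is a proper parabolic subgroup of $G$ (its quotient $G/(A\ltimes R) = S/A$ being a flag manifold). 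Then $\Gamma \subset A \ltimes R$ contradicts the hypothesis that $\Gamma$ lies in no proper parabolic of $G$, so $S = \{e\}$ and $G$ is solvable.
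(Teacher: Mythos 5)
Your first half is fine and matches the paper: Lemma~\ref{lem8} gives a proper algebraic subgroup $A\subsetneq S$ with $\Gamma\subset A\ltimes R$, and pulling back along $G/\Gamma\to G/(A\ltimes R)\cong S/A$ does force $\mathcal O(S/A)=\mathbb C$. The gap is the ``classical fact'' you then invoke: it is false that, for $S$ complex semisimple and $A$ an algebraic subgroup, $\mathcal O(S/A)=\mathbb C$ forces $A$ to be parabolic. This very paper contains a counterexample: in Example~\ref{ex1}, $S=SL(3,\mathbb C)$ and $H$ is a non--parabolic algebraic subgroup (contained in the Borel $B$, with $S/H$ a noncompact $\mathbb C$--bundle over $S/B$) for which $\mathcal O(S/H)=\mathbb C$. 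Your heuristic breaks down precisely here: in a Chevalley-type projective realization the boundary of $S/A$ in its closure can have codimension $\ge 2$, in which case rational functions regular off the boundary extend across it and give nothing non--constant, so non--compactness of $S/A$ does not produce holomorphic functions.

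The argument can be repaired, and the repaired version is exactly the paper's proof. Note that for your final contradiction you do not need $A$ itself to be parabolic, only that $A$ (hence $\Gamma\subset A\ltimes R$) lies in a proper parabolic subgroup of $G$. So split according to whether $A$ is reductive. If $A$ is reductive, then $S/A$ is an affine variety (Matsushima's criterion), hence Stein of positive dimension since $A$ is proper, and the pullback of its non--constant holomorphic functions contradicts $\mathcal O(G/\Gamma)=\mathbb C$; no appeal to a ``$\mathcal O=\mathbb C\Rightarrow$ parabolic'' statement is needed. If $A$ is not reductive, then by \cite[Theorem, \S 30.1]{Hum75} $A$ is contained in a proper parabolic subgroup $P$ of $S$, so $\Gamma\subset P\ltimes R$, a proper parabolic subgroup of $G$, contradicting the hypothesis. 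With your key step replaced by this dichotomy the proof closes, and coincides with the one given in the paper.
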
  

\begin{proof}  
The radical orbits are closed, e.g., see \cite{Gil04}.       
By lemma \ref{lem8} the subgroup $\Gamma$ is contained in a proper subgroup of $G$ of the form 
$A \ltimes R$, where $A$ is an algebraic subgroup of $S$.  
Thus there are fibrations    
\[  
         G/\Gamma \; \longrightarrow \;  G/R \cdot\Gamma  \; \longrightarrow \; S/A ,  
\]  
where the base $G/R\cdot\Gamma = S/\Lambda$ with $\Lambda := S \cap R.\Gamma$.  
If $A$ is reductive, then $S/A$ is Stein and we get non-constant holomorphic functions 
on $X$ as pullbacks using the above fibrations.    
But this contradicts the assumption that $\mathcal{O}(X) \simeq \mathbb C$.  
If $A$ is not reductive then \cite[Theorem, \S 30.1]{Hum75} applies and     
$A$ is contained in a proper parabolic subgroup of $S$.      
But this implies $\Gamma$ is too, thus contradicting the assumption that this is not the case.  
\end{proof}  
  
\subsection{The algebraic setting revisited}  
Throughout we repeatedly use two results of Akhiezer concerning the invariant $d_X$ in the setting where 
$X=G/H$ and $G$ is a connected linear algebraic group over $\mathbb C$ and $H$ 
is an algebraic subgroup of $G$.   
For the convenience of the reader we now state these here.             

\begin{thm} [$d=1$ \cite{Akh77};  $d=2$ \cite{Akh83}]  \label{alggps}  
Suppose $G$ is a connected linear algebraic group over $\mathbb C$ and $H$ is an   
algebraic subgroup of $G$ and $X := G/H$.  
\begin{enumerate}  
\item $d_X = 1 \Longrightarrow$ $H$ is contained in a parabolic subgroup $P$ of $G$ with $P/H = \mathbb C^*$.  
\item $d_X = 2 \Longrightarrow$ $H$ is contained in a parabolic subgroup $P$ of $G$ with $P/H$ being:    
\subitem (a) $ \mathbb C$
\subitem (b) the affine quadric $Q_2$ 
\subitem (c) the complement of a quadric curve in $\mathbb P_2$    
\subitem (c) $(\mathbb C^*)^2$     
\end{enumerate}  
\end{thm}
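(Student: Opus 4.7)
The plan is to reduce the problem, by means of an equivariant fibration over a flag manifold, to the classification of affine homogeneous spaces of small complex dimension, and then to enumerate these by hand.

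\textbf{Step 1 (a parabolic enclosing $H$).} The key structural input is the existence of a parabolic subgroup $P \subset G$ with $H \subset P$ and $P/H$ affine. For $G$ reductive this is classical: choose $P$ minimal among parabolics of $G$ containing $H$, use the Levi--Malcev decomposition $H = L_H \ltimes U_H$ (with $U_H$ lying in the unipotent radical of $P$), and apply Matsushima's criterion to the reductive quotient to see that $P/H$ is an affine variety. For general linear algebraic $G = S \ltimes R$, one first handles the radical using the fact that $R$-orbits in $G/H$ are closed in the algebraic setting, thereby reducing to the semisimple case.

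\textbf{Step 2 (Fibration Lemma).} The locally trivial fibration
\[
  P/H \;\longrightarrow\; G/H \;\longrightarrow\; G/P
\]
has compact, simply connected base $G/P$ (a flag manifold), so it is orientable. The Fibration Lemma then gives
\[
  d_{G/H} \;=\; d_{P/H} \,+\, d_{G/P} \;=\; d_{P/H},
\]
so the problem reduces to classifying affine homogeneous fibers $F := P/H$ with $d_F \in \{1,2\}$.

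\textbf{Step 3 (classification of small-$d$ affine homogeneous fibers).} Since $F$ is affine, it is Stein, and Proposition~\ref{SerreStein} forces $\dim_{\mathbb C} F \le d_F \le 2$. The one-dimensional affine homogeneous complex manifolds are exactly $\mathbb C$ (with $d=2$) and $\mathbb C^*$ (with $d=1$), which supplies case (1) and case (2)(a). In complex dimension two, the classical classification of affine surfaces admitting a transitive algebraic group action yields the finite list $\mathbb C^2$, $\mathbb C\times\mathbb C^*$, $(\mathbb C^*)^2$, the affine quadric $Q_2$, and $\mathbb P_2$ minus a smooth conic. A direct mod-$2$ homology computation (these spaces are homotopy equivalent to a point, $S^1$, $T^2$, $S^2$, and $\mathbb{RP}^2$ respectively) gives $d$-invariants $4, 3, 2, 2, 2$. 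Discarding the first two leaves exactly the list in the theorem.

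\textbf{Main obstacle.} Step~1 is by far the hardest part: producing $P$ and verifying that $P/H$ is affine requires genuine algebraic-group structure theory (Matsushima's criterion, Borel--Tits, and Rosenlicht-type quotients), and this is the substance of Akhiezer's original arguments. Once the fibration is in hand, Steps~2 and~3 are essentially formal topological and enumerative bookkeeping.
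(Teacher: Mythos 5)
This statement is not proved in the paper at all: it is quoted verbatim from Akhiezer's papers \cite{Akh77} and \cite{Akh83} ``for the convenience of the reader,'' so there is no in-paper argument to compare yours against. Judged on its own terms, your outline has the right architecture --- it is essentially the shape of Akhiezer's argument: fiber $G/H$ over a flag manifold $G/P$ with affine fiber, use orientability over the simply connected compact base to get $d_{G/H}=d_{P/H}$, bound $\dim_{\mathbb C}P/H$ by Serre's vanishing, and enumerate the low-dimensional affine homogeneous spaces (your mod-$2$ homotopy computations, including $\mathbb P_2\setminus Q\simeq\mathbb{RP}^2$ with $d=2$, are correct and are precisely why the invariant is defined with $\mathbb Z_2$ coefficients).

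The genuine gap is that Step 1 is asserted, not proved, and it carries the entire content of the theorem. Two specific soft spots: (i) ``choose $P$ minimal among parabolics containing $H$'' does work, but only after you prove that minimality forces the image of $H$ in the Levi quotient $L=P/R_u(P)$ to be reductive (via Borel--Tits applied to that image), that a reductive $L_H$ can be conjugated into $L$ (Mostow), and that the resulting fibration $P/H\to L/\overline{H}$ with fiber $R_u(P)/R_u(H)\cong\mathbb C^k$ has affine total space --- none of which is routine; (ii) the reduction from general linear algebraic $G=S\ltimes R$ to the reductive case is not just ``$R$-orbits are closed'': parabolics of $G$ contain the radical by definition, so you must pass to $G/R_u(G)$ and check that the fiber $R\cdot H/H$ is absorbed correctly into $P/H$. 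Finally, the completeness of your list of homogeneous affine surfaces ($\mathbb C^2$, $\mathbb C\times\mathbb C^*$, $(\mathbb C^*)^2$, $SL_2/T$, $SL_2/N(T)$) is itself a classification theorem that needs a reference or proof; as written your Step 3 assumes exactly the kind of case analysis that Akhiezer's papers supply. So the proposal is a correct roadmap, not a proof.
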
     

%%%%%%%%%%%%%%%%%%%%%%%%%%%%%%%%%%%%   
\subsection{Cousin group bundles over flag manifolds}  

In this section we prove a result concerning the structure of K\"{a}hler homogeneous manifolds  
whose normalizer fibrations are Cousin group bundles over flag manifolds,      
where there is no assumption about the invariant $d$.   
In order to do this we will show that one can reduce to the case where a complex reductive group is acting transitively and   
employ some now classical details about the structure of parabolic subgroups, e.g., see \cite{Akh95} or \cite{FHW06}.   
A crucial point in the setting of interest is the fact that all $S$--orbits are closed  and 
have algebraic isotropy \cite[Theorem 5.1]{GMO11}.      

\begin{prop}  \label{Cousinoverflag}  
Suppose $G/H$ is a K\"{a}hler homogeneous manifold whose normalizer fibration $G/H \to G/N$ has fiber $C := N/H$ a Cousin 
group and base $Q := G/N$ a flag manifold.   
Then there exists a closed complex subgroup $J$ of $N$ containing $H$ such that 
the fibration $G/H \to G/J$ realizes $X$ as a  $(\mathbb C^*)^k$--bundle over a product 
$G/J = Q \times Y$, where $Y$ is a Cousin group  with $d_Y = d_X -  k $.   
\end{prop}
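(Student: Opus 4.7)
The plan is to exploit the flag structure $Q = G/N = S/P$, where $S$ is a maximal semisimple subgroup of $G$ (so $G = S \ltimes R$) and $P := S \cap N$ is a parabolic subgroup of $S$. Because $H \subset N$, one has $S \cap H \subset P$. By \cite[Theorem 5.1]{GMO11}, the K\"ahler hypothesis forces $S \cdot H$ to be closed in $G$ and $S \cap H$ to be algebraic in $S$. In particular the $S$-orbit $S \cdot H/H \subset G/H$ is closed, and intersecting it with the fiber $C$ of the normalizer fibration over $eN$ shows that the subgroup $P \cdot H/H = P/(P \cap H) \subset C$ is closed.

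This closed subgroup is the image of $P$ in the abelian group $C$, so $[P,P] \subset P \cap H = S \cap H$, and the induced map factors through the abelianization $P^{\mathrm{ab}} = L/[L,L] \cong (\mathbb{C}^*)^m$, the central torus of a Levi factor of $P$. The hard part will be identifying this closed image precisely as an algebraic torus $(\mathbb{C}^*)^k$: the key ingredient is the classical fact that closed connected complex Lie subgroups of an algebraic torus are themselves algebraic subtori, so the kernel of $(\mathbb{C}^*)^m \to C$ has identity component an algebraic subtorus and the quotient is again an algebraic torus (up to a finite component group, which I would absorb by passing to a finite covering of $X$ if necessary). Setting $J := P \cdot H$ then yields a closed complex subgroup of $N$ containing $H$ with $J/H \cong (\mathbb{C}^*)^k$. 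Because $C$ is abelian, $J/H$ is normal in $C$, so $J$ is normal in $N$ and $Y := N/J$ is an abelian complex Lie group.

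The fibration $G/J \to G/N = Q$ is then a principal $Y$-bundle. The $S$-orbit of $eJ$ in $G/J$ has stabilizer $S \cap J = P \cdot (S \cap H) = P$ (using $S \cap H \subset P$), hence equals $S/P = Q$ and projects isomorphically onto the base, providing a holomorphic section. A principal bundle admitting a section is trivial, so $G/J \cong Q \times Y$. The pullback $\mathcal{O}(Y) \hookrightarrow \mathcal{O}(C) = \mathbb{C}$ identifies $Y$ as a Cousin group. Finally, the Fibration Lemma applied to the orientable bundle $G/H \to G/N$ (base $Q$ simply connected, fiber $C$) gives $d_X = d_C + d_Q = d_C$, and applied again to the principal $(\mathbb{C}^*)^k$-bundle $C \to Y$ yields $d_C = k + d_Y$, so $d_Y = d_X - k$.
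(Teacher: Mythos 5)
Your overall architecture is the same as the paper's (intersect the closed $S$--orbit with the Cousin fiber, factor the resulting homomorphism through $P/[P,P]\cong(\mathbb C^*)^m$, obtain a $(\mathbb C^*)^k$ inside $C$, define $J$, and trivialize $G/J\to G/N$ by an $S$--orbit section), but two steps are not justified as written. First, \cite[Theorem 5.1]{GMO11} is a statement about \emph{reductive} groups, and the $G$ in the Proposition need not be reductive. The paper's first move, which you omit, is to replace $G$ by the reductive group $S\times(\mathbb C^*)^q$, where $(\mathbb C^*)^q$ is the linear covering group of the Cousin fiber $C$ acting on $X$ from the right, and only then to invoke that theorem to obtain closedness of the $S$--orbits and algebraicity of $S\cap H$. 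Without this reduction your appeal to closed orbits and algebraic isotropy has no source.

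Second, and more seriously, the ``classical fact'' you lean on at what you yourself call the hard part is false: a closed connected complex Lie subgroup of an algebraic torus need not be an algebraic subtorus. For example, $\{(e^{z},e^{iz}) : z\in\mathbb C\}$ is a closed connected complex subgroup of $(\mathbb C^*)^2$ isomorphic to $\mathbb C$ --- writing $(\mathbb C^*)^2=\mathbb C^2/2\pi i\mathbb Z^2$ and $V=\mathbb C\cdot(1,i)$, the image of $2\pi i\mathbb Z^2$ under $(z_1,z_2)\mapsto z_2-iz_1$ is the discrete lattice $2\pi(\mathbb Z+i\mathbb Z)$, so $V+2\pi i\mathbb Z^2$ is closed while $V\cap 2\pi i\mathbb Z^2=\{0\}$ --- and it is not algebraic; the corresponding quotient of $(\mathbb C^*)^2$ is a compact torus. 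If the identity component of your kernel were such a subgroup, the image of $P$ in $C$ would be a positive--dimensional compact torus rather than $(\mathbb C^*)^k$, and the conclusion would fail. What actually excludes this is the ingredient you recorded but did not use at this step: $P\cap H=S\cap H$ is an \emph{algebraic} subgroup of $P$, so its image in the algebraic quotient $P/[P,P]\cong(\mathbb C^*)^m$ is an algebraic subgroup whose identity component is a genuine subtorus, and the quotient of $(\mathbb C^*)^m$ by an algebraic subgroup is again an algebraic torus $(\mathbb C^*)^k$ (no finite covering of $X$ is needed to absorb the component group). With these two repairs the remainder of your argument --- normality of $J$ in $N$, the $S$--orbit section trivializing the principal $Y$--bundle $G/J\to G/N$, $\mathcal O(Y)=\mathbb C$, and the two applications of the Fibration Lemma --- is correct and agrees with the paper.
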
     

\begin{proof}   
Our first task is to show that there is a reductive complex Lie group acting holomorphically and transitively on $X$.   
Write $C = \mathbb C^k/\Gamma$ and note that there exists a subgroup $\widehat{\Gamma} < \Gamma$ 
such that $\widehat{C} := \mathbb C^q/\widehat{\Gamma}$ is isomorphic to $(\mathbb C^*)^q$  
and is a covering group of $C$, e.g., see  \cite[\S 1.1]{AK01}.         
In particular, the reductive complex Lie group $\widehat{G} :=  S\times \widehat{C}$ acts transitively on $X$.         
We drop the hats from now on and assume, by going to a finite covering, if necessary, that 
$G= S \times Z$ is a reductive complex Lie group, where $Z \cong (\mathbb C^*)^q$ is the center of $G$   
and $S$ is a maximal semisimple subgroup.     

\bigskip   
Let $x_0\in X$ be the neutral point, $z_0\in G/N$ be its projection in the base, and $F_{z_0}$ be the fiber over the point $z_0$.     
Now the $S$--orbit $S\cdot H/H = S/S\cap H$ is closed in $X$  and $S\cap H$ is an algebraic subgroup of $S$ \cite[Theorem 5.1]{GMO11}.   
So there is an induced fibration  
\begin{eqnarray} \label{Sorbit}  
    \begin{array}{ccl}  S/S\cap H & \hookrightarrow & G/H \\  
                                   & & \\  
                                    A \downarrow & & \downarrow F_{z_0} \cong C \\  
                                    & & \\  
                                    S/P & = & G/N  \end{array}     
\end{eqnarray} 
Since the center normalizes any subgroup, we have $N = P \times Z$ with $P$ a parabolic subgroup of $S$.   
Now any parabolic group $P$ can be written as a semidirect product of its unipotent radical $U_P$ and a subgroup $L_P$.  
Further, $L_P$ is the centralizer in $S$ of a subgroup $C_P$ of $L_P$.  
In particular, it follows that $C_P$ is the center of $L_P$, e.g., see \cite{Akh95}.  
In passing, we also note that the commutator subgroup of $L_P$ is semisimple and thus $L_P$ is a complex reductive Lie group.    

\bigskip  
The bundle in (\ref{Sorbit}) is defined by a representation 
$\rho:  N  \longrightarrow {\rm Aut}^0(C)  \cong  C$     
and the group  $A =  \rho(P)$ lies in the connected component of the identity of the automorphism group 
of $C$ because $P$ is connected.          
Since $C$ is Abelian, $\rho|_P$ factors through the canonical projection from $P$ to $P/P'$.  
Because every parabolic subgroup contains a maximal torus, every root space in 
the parabolic is in its commutator subgroup.  
Also $C_P$ is not in the commutator subgroup, while the commutator subgroup of $L_P$ is.
As a consequence, $P/P'  \cong (\mathbb C^*)^p$, with $p = \dim C_P$, e.g., see also \cite[Proposition 8, \S 3.1]{Akh95}.       
Now the $S$--orbits in $G/H$ are closed and the $C_P$--orbit in the typical fiber $N/H$ 
through the neutral point is the intersection of $N/H$ with the corresponding $S$--orbit.     
So this $C_P$--orbit is closed and complex and is an algebraic quotient of the group $C_P$ biholomorphic to $(\mathbb C^*)^k$ for some $k \le p$.   
In this reductive setting $S$ is a normal subgroup of $G$ and so all $S$--orbits are biholomorphic and fiber over 
$G/N$ with fiber biholomorphic to $(\mathbb C^*)^k$.   
If $k=0$, then $X = C \times G/N$, by the argument given in the last paragraph of the proof.  

\bigskip   
Assume $k > 0$.   
Let $N \; \stackrel{\pi_1}{\longrightarrow} \; N/H^0 \; \stackrel{\pi_2}{\longrightarrow} \; N/H \; \cong \; C  $,  
where $\pi_1$ is the canonical homomorphism with $H^0$ normal in $N$ and $\pi_2$ is a covering homomorphism, 
since $H/H^0$ is a (normal) discrete subgroup of the Abelian group $N/H^0$.  
Set 
\[  
     J \; := \;  \pi^{-1}_1 \circ \pi^{-1}_2 (\rho(C_P)) .   
\]  
It follows that $J$ is a closed complex Lie 
subgroup of $G$ contained in $N$, since both $\pi_1$ and $\pi_2$ are holomorphic Lie group homomorphisms,   
that $J$ contains $H$ by its definition,  and that the $J$--orbit through the neutral point is the $C_P$--orbit.    
This yields an intermediate fibration 
\[  
         G/H \;  \stackrel{ (\mathbb C^*)^k}\longrightarrow \;  G/J \; \longrightarrow \; G/N
\]   
with the first fiber $(\mathbb C^*)^k \cong A$.    

\bigskip  
We claim that the bundle $G/J \to G/N$ is holomorphically trivial.  
In order to see this note that the $P$--action on the neutral fiber of the bundle $G/J \to G/N$ is trivial.  
Otherwise, the dimension of the $P$--orbit in $F_{z_0}$ would be bigger than $k$, as assumed above, and this 
would be a contradiction.   
As a consequence, all $S$--orbits in $G/J$ are sections of the bundle $G/J \to G/N$.   
Since $G/N$ is simply connected, this bundle is holomorphically trivial, i.e., $G/J = (C/A) \times S/P$.  
Because $C$ is a Cousin group, $Y := C/A$ is also a Cousin group.     
The statement about the topological invariant follows because  
$d_{C/A} = d_{G/J}$, since $S/P$ is compact, and $d_{G/J} = d_X - k$.         
\end{proof}  

\begin{remark}  \label{BRGOR}   
The case $d_X = 1$ is treated in \cite[Proposition 5]{GOR89}, where it is assumed that $X$ has more than one end.  
For $X$ K\"{a}hler this is equivalent to $d_X =1$.   
\end{remark}

\section{The Discrete Case}  

Throughout this section we assume that $X = G/\Gamma$ is K\"{a}hler with $d_X = 2$,   
where $G$ is a connected, simply connected, complex Lie group and $\Gamma$ is a  discrete subgroup of $G$.  
We first show that $G$ is solvable.  
Then we prove that a finite covering of such an $X$ is biholomorphic to a product $C \times A$, where $C$ is a 
Cousin group and $A$ is a holomorphically separable complex Abelian Lie group.  

\subsection{The reduction to solvable groups} 

We first handle the case when the K\"{a}hler homogeneous manifold has no non--constant 
holomorphic functions.       

\begin{lemma}  \label{disone}  
Assume $\Gamma$ is a discrete subgroup of a connected simply connected complex Lie group $G$ 
that is not contained in a proper parabolic subgroup of $G$, 
with $X := G/\Gamma$ K\"{a}hler, ${\mathcal O}(X) = \mathbb C$, and $d_X \le 2$.  
Then $G$ is solvable. 
\end{lemma}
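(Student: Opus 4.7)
The plan is to write $G = S \ltimes R$ in Levi-Malcev form and argue by contradiction, assuming $S \ne \{e\}$. The endgame is to reduce to Theorem \ref{AG942} after first forcing $\dim_{\mathbb{C}} R \le 2$.

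I would begin by setting up the radical-orbit fibration. Since radical orbits on a K\"ahler homogeneous manifold of a complex Lie group are closed (\cite{Gil04}), $R\Gamma$ is closed in $G$ and one has
\[
R/(R\cap\Gamma)\ \hookrightarrow\ G/\Gamma\ \longrightarrow\ G/R\Gamma \;=\; S/\Lambda,
\]
with $\Lambda := S \cap R\Gamma$. The fiber $F := R/(R\cap\Gamma)$ is a K\"ahler solvmanifold, so by \cite{OR88} and \cite{GO08} some finite covering of $F$ is a principal Cousin-group bundle over a Stein solvmanifold. A Cousin group contributes its full complex dimension to $d$, and by Proposition \ref{SerreStein} the Stein base satisfies $\dim_{\mathbb{C}} \le d$ as well; thus $d_F \ge \dim_{\mathbb{C}} R$. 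The Fibration Lemma then forces $\dim_{\mathbb{C}} R \le d_F \le d_X \le 2$.

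With $\dim_{\mathbb{C}} R \le 2$ in hand, I would dispatch the three possibilities. If $\dim_{\mathbb{C}} R = 2$, Theorem \ref{AG942} immediately gives $G$ solvable, contradicting $S \ne \{e\}$. If $\dim_{\mathbb{C}} R = 0$, then $G = S$ is semisimple; by \cite{BO88} K\"ahlerness of $S/\Gamma$ forces $\Gamma$ algebraic, and a discrete algebraic subgroup of a semisimple group is finite, so $X$ would be a compact manifold transitively acted on by a noncompact complex semisimple group, which is impossible. If $\dim_{\mathbb{C}} R = 1$, I would rerun the argument in the proof of Theorem \ref{AG942}: an analogue of Lemma \ref{lem8} for one-dimensional radical places $\Gamma$ in $A \ltimes R$ with $A$ an algebraic subgroup of $S$; then $A$ reductive yields non-constant holomorphic functions on $X$ via the Stein base $S/A$, contradicting $\mathcal{O}(X) = \mathbb{C}$, while $A$ non-reductive lands $A$, and hence $\Gamma$, in a proper parabolic of $G$ by \cite[Theorem, \S 30.1]{Hum75}, contradicting the non-parabolic hypothesis.

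The main obstacle will be the bound $\dim_{\mathbb{C}} R \le 2$. The delicate point is that the base $S/\Lambda$ of the radical-orbit fibration is not a priori homotopy-equivalent to a compact manifold, so instead of the clean additive statement in (1) or (3) of the Fibration Lemma one must select a CW model for $S/\Lambda$ of dimension at most $\dim_{\mathbb{R}} S/\Lambda$ and invoke part (2) carefully, combining it with the decomposition of $F$ into Cousin-over-Stein pieces to extract the needed inequality $d_X \ge \dim_{\mathbb{C}} R$.
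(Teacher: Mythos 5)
There is a genuine gap at the crux of your argument: the inequality $d_F \ge \dim_{\mathbb C} R$ is false. A Cousin group does \emph{not} contribute its full complex dimension to $d$: writing it as $\mathbb C^n/\Lambda$ with $\Lambda$ of rank $n+k$, one gets $d = 2n-(n+k) = n-k$, which is $0$ when the group is a compact torus. So if, say, $R\cap\Gamma$ is a cocompact lattice in an abelian radical $R\cong\mathbb C^n$, the fiber $F=R\Gamma/\Gamma$ is a compact torus with $d_F=0$ no matter how large $n$ is, and nothing in the hypotheses bounds $\dim_{\mathbb C} R$. (Your Cousin-over-Stein decomposition of $F$ only yields $d_F \ge \dim_{\mathbb C}(\text{Stein base}) + d_{\text{Cousin fiber}}$, and the second summand can vanish.) Consequently the entire strategy of first forcing $\dim_{\mathbb C} R \le 2$ and then invoking Theorem~\ref{AG942} cannot be repaired as stated; your closing paragraph correctly identifies this as the delicate point, but the difficulty is not one of choosing a CW model --- the inequality itself fails.

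The paper's proof circumvents this by bounding the dimension of a \emph{quotient} of $R$ rather than of $R$ itself. One takes the holomorphic reduction $R\Gamma/\Gamma \to R\Gamma/J$ of the fiber (the case $\mathcal O(R\Gamma/\Gamma)=\mathbb C$, where your bound degenerates completely, is quoted from \cite{Gil04}); its base is Stein of dimension $\le d_X \le 2$, and --- the key structural input --- $J^0$ is normal in all of $G$, so one may pass to $\widehat G := G/J^0$ whose radical $\widehat R = R/J^0$ has dimension $1$ or $2$. Then $\dim\widehat R=1$ forces $\widehat G = S\times\widehat R$ to be a direct product and $S=\{e\}$ by \cite{OR87}, while $\dim\widehat R=2$ is either again a product or is handled by Theorem~\ref{AG942}. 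If you want to salvage your outline, you must replace ``$\dim_{\mathbb C} R\le 2$'' by this reduction modulo $J^0$; your ad hoc treatment of $\dim R=1$ via an unproved one-dimensional analogue of Lemma~\ref{lem8} then becomes unnecessary. A smaller error: in the semisimple subcase, $S/\Gamma$ with $\Gamma$ finite is a \emph{noncompact} Stein manifold, not a compact one; the contradiction there is with $\mathcal O(X)=\mathbb C$ (or with $2\ge d_X\ge\dim_{\mathbb C}S\ge 3$), not with compactness.
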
  

\begin{proof}  
Assume $G = S \ltimes R$ is a Levi decomposition.  
Since the $R$--orbits are closed, we have a fibration 
\[  
         G/\Gamma \; \longrightarrow \; G/R\cdot\Gamma \; = \; S/\Lambda ,  
\]   
where $\Lambda := S \cap R\cdot\Gamma$ is Zariski dense and discrete in $S$, e.g., see \cite{Gil04}.  
Now if $\mathcal{O}(R\cdot\Gamma/\Gamma) = \mathbb C$, then the result was proved in \cite{Gil04}.  
Otherwise, let 
\[ 
          R\cdot\Gamma/\Gamma \; \longrightarrow R\cdot\Gamma/J \; =: \; Y
\]  
be the holomorphic reduction.  
Then $Y$ is holomorphically separable and since $R$ acts transitively on $Y$,  
it follows that $Y$ is Stein \cite{HO86}.  
But $2 = d_X \ge d_Y \ge \dim_{\mathbb C} Y$.  
Further $J^0$ is a normal subgroup of $G$.  
As a consequence, the quotient group $\widehat{R} := R/J^0$ has complex dimension one or two.  
If $\dim \widehat{R} = 1$, then $\widehat{G} := S \times \widehat{R}$ is a product and this implies $S = \{ e \}$ 
by \cite{OR87}.  
If $\dim \widehat{R} = 2$, then $\widehat{G}$ is either a product, see \cite{OR87} again, or is   
a non--trivial semidirect product.  
In the latter case the result now follows by Theorem \ref{AG942}.     
\end{proof}  

In the next Proposition we reduce to the setting where the Levi factor is $SL(2,\mathbb C)$.  
We first prove a technical Lemma in that setting.   

\begin{lemma}  \label{distwo}  
Suppose $G/\Gamma$ is K\"{a}hler and $d_{G/\Gamma} \le 2$, where $\Gamma$ is a discrete subgroup 
of a connected, complex Lie group of the form $G = SL(2,\mathbb C) \ltimes R$ with $R$ the radical of $G$.       
Then $\Gamma$ is not contained in a proper parabolic subgroup of $G$.  
\end{lemma}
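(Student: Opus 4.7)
The plan is to argue by contradiction. Suppose $\Gamma$ is contained in some proper parabolic subgroup of $G$. Since $SL(2,\mathbb{C})$ admits, up to conjugation, only one proper parabolic subgroup---namely a Borel subgroup $B$---I take $P := B \ltimes R$ and assume $\Gamma \subset P$. Then $P$ is solvable but non-Abelian (because $B$ is non-Abelian), so its commutator subgroup $[P,P]$ is a positive-dimensional connected subgroup of $P$.

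First I will form the holomorphic fibration $\pi : X = G/\Gamma \to G/P \cong \mathbb{P}_{1}$, with fiber $F := P/\Gamma$. Since $\mathbb{P}_{1}$ is simply connected, the orientable case of the Fibration Lemma gives $d_{X} = d_{F} + d_{\mathbb{P}_{1}} = d_{F}$, so $d_{F} \le 2$. As $F$ is a closed complex submanifold of the K\"ahler manifold $X$, it inherits a K\"ahler structure, making $F$ a K\"ahler solvmanifold with $d_{F} \le 2$.

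The strategy will be to exploit the structure of K\"ahler solvmanifolds of small non-compact dimension to conclude that $F$ is Abelian (as a coset space of the Lie group $P$), at least after passing to a finite covering. Once this is achieved, some finite-index subgroup $\Gamma' \le \Gamma$ is normal in $P$ with $P/\Gamma'$ Abelian, which forces $[P,P] \subseteq \Gamma'$---an impossibility, since $[P,P]$ is positive-dimensional while $\Gamma'$ is discrete, yielding the desired contradiction. If $\mathcal{O}(F) = \mathbb{C}$, then by \cite{OR88} combined with \cite{GO08} a finite cover of $F$ is a Cousin group, hence Abelian, and the argument closes. If $\mathcal{O}(F) \ne \mathbb{C}$, I will consider the holomorphic reduction $F \to P/J$: by \cite{HO86} the base $P/J$ is Stein (since $P$ is solvable), and by \cite{OR88} together with \cite{GO08} the fiber $J/\Gamma'$ is a Cousin group (after a finite covering), hence Abelian. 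Applying the commutator argument to $J$ then shows $J$ itself must be Abelian.

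The hard part will be, in this second case, to upgrade ``$J$ Abelian'' to ``$F$ Abelian up to finite covering''. I expect to accomplish this by invoking the classification of K\"ahler solvmanifolds of non-compact dimension at most two with discrete isotropy (Case I of the Main Theorem, as established in \cite{Ahm13}), which asserts that such an $F$ is, after a finite covering, a product $C \times A$ of a Cousin group and a Stein Abelian Lie group---hence an Abelian Lie group. Alternatively, one may combine the Abelianness of $J$, the Stein structure of $P/J$ (of complex dimension at most two by Proposition \ref{SerreStein} together with $d_F \le 2$), and the explicit form $P = B \ltimes R$ with $B = \mathbb{C}^{*} \ltimes \mathbb{C}$, to show directly that $F$ is Abelian modulo a finite-index subgroup of $\Gamma$, producing the same contradiction with $[P,P]$ positive-dimensional.
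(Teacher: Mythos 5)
Your reduction to the fiber $F := P/\Gamma$ over $G/P \cong \mathbb P_1$, with $d_F = d_X \le 2$ and $F$ K\"ahler, is fine and is also where the paper starts. The gap is in the step that is supposed to deliver the contradiction: from ``a finite covering of $F$ is biholomorphic to $C \times A$, hence to an Abelian complex Lie group'' you infer that a finite-index subgroup $\Gamma' \le \Gamma$ is normal in $P$ with $P/\Gamma'$ Abelian, whence $[P,P] \subseteq \Gamma'$. The classification you invoke is a statement about the biholomorphism type of the quotient \emph{manifold}; it says nothing about the group $P$ that happens to act transitively on it, and the biholomorphism $F' \cong C\times A$ is not $P$-equivariant. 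A non-Abelian group can act transitively with discrete isotropy on a manifold biholomorphic to an Abelian Lie group: the Borel subgroup $B$ of $SL(2,\mathbb C)$ is itself biholomorphic to the Stein Abelian group $\mathbb C^* \times \mathbb C$, yet $[B,B] \cong \mathbb C$ is not contained in the (trivial) isotropy. More to the point, the paper's own Proposition \ref{torbdle} (the case $G/J^0 \cong B$) describes a \emph{non-Abelian} solvable $G$ with discrete $\Gamma$ such that $G/\Gamma$ is K\"ahler with $d=2$ and is biholomorphic to a quotient of $\mathbb C^n$ by a rank $2n-2$ lattice; your inference applied there would force $[G,G]$ to be discrete, which is false. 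The ``alternatively'' paragraph inherits the same defect: knowing $J^0$ is Abelian and $P/J$ is Stein of dimension $\le 2$ still does not make $P$ Abelian modulo a finite-index subgroup of $\Gamma$ (already $P=B$, $J=[B,B]$ defeats this). So no contradiction is actually produced.

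What really kills the parabolic case is the semisimple part, which your argument discards after the first fibration. The paper shows $G/J$ ($J$ the holomorphic reduction of $P/\Gamma$ inside $P$) is K\"ahler, so the isotropy $S\cap J$ of the $SL(2,\mathbb C)$-orbit is algebraic by \cite{BO88}; it then enumerates the algebraic subgroups of $B$ and in each case contradicts $d_{S/S\cap\Gamma}=3$ (which holds because $S\cap\Gamma$ is finite) via the Fibration Lemma applied to $S/S\cap\Gamma \to S/S\cap J$. The essential obstruction is that $SL(2,\mathbb C)$ carries $d=3$ worth of non-compactness that the fibration over $\mathbb P_1$ cannot absorb when $d_X\le 2$; a correct proof has to confront this, and yours never does. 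A smaller point: inside this paper the discrete classification (Theorem \ref{disc}) is proved \emph{using} the present lemma via Proposition \ref{solvgp}, so you may only invoke its solvable-group portion (which is independent of the lemma); since $P$ is solvable that is legitimate, but it should be stated to avoid circularity.
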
  

\begin{proof}  
Assume the contrary, i.e., that $\Gamma$ is contained in a proper parabolic subgroup 
and let $P$ be a maximal such subgroup of $G$.  
Note that $P$ is isomorphic to $B \ltimes R$, where $B$ is a Borel subgroup of $SL(2,\mathbb C)$.  
Let $P/\Gamma \to P/J$ be the holomorphic reduction.  
Then $P/\Gamma$ is a Cousin group \cite{OR88} and $P/J$ is Stein \cite{HO86}.    
Note that $J \not= P$, since otherwise $P$ would be Abelian, a contradiction.  
The Fibration Lemma and Proposition \ref{SerreStein} imply  $\dim_{\mathbb C} P/J = 1$ or 2.  
So $P/J$ is biholomorphic to $\mathbb C, \mathbb C^*, \mathbb C^*\times\mathbb C^*$, or the complex Klein bottle.  

\bigskip
In the first two cases $P/J$  is equivariantly embeddable in $\mathbb P_1$ and by \cite{Kod54} 
it follows that $G/J$ is K\"{a}hler.  
In the latter two cases the fiber $J/\Gamma$ is compact by the Fibration Lemma and we can push 
down the K\"{a}hler metric on $X$ to obtain a K\"{a}hler metric on $G/J$, e.g., see \cite{Bla56}.     
In particular, the $S$--orbit $S/(S\cap J)$ in $G/J$ is K\"{a}hler and so its isotropy $S\cap J$ is algebraic \cite{BO88}.    
Now consider the diagram    
\[  \begin{array}{ccccc} 
       G/\Gamma & \stackrel{F}{\longrightarrow} & G/J & \stackrel{Y}{\longrightarrow} & G/P \\  
       & & & &  ||  \\ 
       \cup & &\cup  & & \mathbb P_1 \\   
       & & & & || \\   
        S/S\cap\Gamma & \stackrel{F_S}{\longrightarrow} & S/S\cap J & \stackrel{Z}{\longrightarrow} & S/B \end{array}  
\]    
Note that because $Y := P/J$ is noncompact and $d_{G/\Gamma}=2$, it follows    
from the Fibration Lemma that either $d_F = 1$ or $F$ is compact.  
Since $F$ is an Abelian Lie group, it is clear that $d_{F_S} \le d_F$.   

\bigskip 
The following enumerate, up to isomorphism, the algebraic subgroups of $B$ and in each case 
we derive a contradiction.        
\begin{enumerate}  
\item $\dim_{\mathbb C} S \cap J = 2$.  
Then $S\cap J = B$.     
This yields the contradiction $d_{S/S\cap\Gamma} \le d_{F_S} + d_{S/B} =1+0=1 < 3 =   
d_{S/S\cap\Gamma}$, since $S\cap \Gamma$ is finite.  
\item  $\dim_{\mathbb C} S \cap J = 1$.  
\subitem (a) If $S\cap J = \mathbb C^*$, then $S/S\cap J$ is an affine quadric or the complement 
of a quadric curve in $\mathbb P_2$ and thus $Z = \mathbb C$.  
So $P/J \not= \mathbb C^*$ and is either $\mathbb C$ or $(\mathbb C^*)^2$, i.e., $d_{P/J}=2$.  
Then the Fibration Lemma implies $F$ is compact and, since $F_S$ is closed in $F$, it must also 
be compact.  
But this forces $S \cap\Gamma$ to be an infinite subgroup of $S\cap J$ which is a contradiction.  
\subitem (b) If $S\cap J = \mathbb C$, then $S/S\cap J$ is a finite quotient of $\mathbb C^2 \setminus \{ (0,0)\}$   
and so $Z = \mathbb C^*$.  
Now $P/J = \mathbb C,   (\mathbb C^*)^2$ or $\mathbb C^*$.  
In the first two instances $F$ would be compact and we get the same contradiction as in (a).  
In the last case $d_F =1$ by the Fibration Lemma and $F_S$ is either compact or $\mathbb C^*$.  
Again $S\cap\Gamma$ is infinite with the same contradiction as in (a).  
\item $\dim_{\mathbb C} S \cap J = 0$.  
Here $S\cap J$ is finite, since it is an algebraic subgroup of $B$.  
Then $\dim S/S\cap J = 3$ and we see that $\dim G/J = 3$, since we know $\dim G/P = 1$ and 
$\dim P/J \le 2$.  
Then $P/J = (\mathbb C^*)^2$ and, since $S/S\cap J$ is both open and closed in $G/J$, 
it follows that $S/S\cap J = G/J$ and $d_{S/S\cap J} = 2$.    
But $F$ is compact and thus so is $F_S$ and we get the contradiction that 
$d_{S/S\cap\Gamma} = 2 < 3= d_{S/S\cap\Gamma}$.  
\end{enumerate}   
As a consequence, $\Gamma$ is not contained in a proper parabolic subgroup of $G$.  
\end{proof}  

\begin{prop}  \label{solvgp}  
Suppose $G/\Gamma$ is K\"{a}hler with $d_{G/\Gamma} \le 2$.  
Then $G$ is solvable. 
\end{prop}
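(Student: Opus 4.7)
Since $G$ is simply connected by the standing hypothesis of this section, write $G = S\ltimes R$ for the Levi--Malcev decomposition and suppose, for contradiction, that $S\ne\{e\}$. The plan is first to show that $\Gamma$ cannot lie in any proper parabolic of $G$ (extending Lemma \ref{distwo} from $S = SL(2,\mathbb C)$ to a general semisimple $S$), and then to combine this with a holomorphic-reduction argument to reduce to the hypotheses of Lemma \ref{disone}, obtaining the contradiction that $G$ is solvable.

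For the parabolic-containment step, I would mimic the proof of Lemma \ref{distwo}: suppose $\Gamma \subset P = B_S\ltimes R$ with $B_S$ a Borel of $S$, so that $P$ is solvable. Form the holomorphic reduction $P/\Gamma \to P/J$, whose fibre is Cousin by \cite{OR88} and whose base is Stein by \cite{HO86}, of complex dimension at most $2$ via Proposition \ref{SerreStein} and the Fibration Lemma. Extending $J$ to a closed complex subgroup of $G$, one transfers a K\"{a}hler metric to $G/J$ using the two devices of Lemma \ref{distwo} (equivariant embedding of $\mathbb C,\mathbb C^*$ into $\mathbb P_1$ when the fibre is noncompact, and pushing the metric down when the fibre is compact). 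The closed $S$-orbit $S/(S\cap J)\subset G/J$ is then K\"{a}hler, so $S\cap J$ is algebraic by \cite{BO88}, and a case analysis on $\dim_{\mathbb C}(S\cap J)$ via Akhiezer's Theorem \ref{alggps} together with $d_X \le 2$ bounds $\dim_{\mathbb C}S \le 3$, forcing $S = SL(2,\mathbb C)$; Lemma \ref{distwo} then produces the desired contradiction.

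With $\Gamma$ now not in any proper parabolic, I would split on $\mathcal{O}(X)$. If $\mathcal{O}(X)=\mathbb C$, Lemma \ref{disone} yields $G$ solvable directly. If $\mathcal{O}(X)\ne\mathbb C$, consider the holomorphic reduction $X\to G/J$; since $\Gamma\subset J$ but $\Gamma$ avoids every proper parabolic, so does $J$. Combined with the bound $\dim_{\mathbb C}(G/J)\le 2$ (Fibration Lemma and Proposition \ref{SerreStein}, using the closedness of $R$-orbits which gives an intermediate fibration over $S/\Lambda$), Akhiezer's Theorem \ref{alggps} then forces any non-trivial $S$ to generate the very parabolic containment of $\Gamma$ we have just ruled out, contradiction.

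The principal obstacle is the parabolic-containment step: adapting the $SL(2,\mathbb C)$-specific case-by-case analysis of Lemma \ref{distwo} to arbitrary semisimple $S$. The key inputs beyond Akhiezer's Theorem \ref{alggps} and the algebraicity of isotropy on closed K\"{a}hler $S$-orbits \cite{BO88} are the fibration arguments that translate the hypothesis $d_X\le 2$ into tight bounds on the dimension of the Stein base, and hence on $\dim_{\mathbb C}S$, so that the rank-one reduction is actually available.
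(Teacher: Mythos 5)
Your outline (first rule out parabolic containment of $\Gamma$, then split on $\mathcal{O}(X)$) matches the paper's, but both halves have genuine gaps. In the parabolic step you reduce at once to $\Gamma \subset B_S \ltimes R$ with $B_S$ a Borel subgroup of $S$; this is not legitimate, since containment in some proper parabolic does not give containment in a minimal one, and you then concede that extending the $SL(2,\mathbb C)$--specific case analysis of Lemma \ref{distwo} to arbitrary semisimple $S$ is an unresolved ``obstacle'', i.e., the step is not actually carried out. The paper avoids any such generalization by an induction on $\dim G$ that is absent from your proposal: if $\Gamma$ lies in a proper parabolic, it lies in a maximal one $P$; because $\Gamma \subset P$, the orbit $P/\Gamma$ is a closed, hence K\"ahler, submanifold of $X$ with $d_{P/\Gamma} \le 2$ (Fibration Lemma applied to $G/\Gamma \to G/P$ with compact base), so by the inductive hypothesis $P$ is solvable; a solvable maximal parabolic of $S\ltimes R$ forces $S \cong SL(2,\mathbb C)$ and $P = B \ltimes R$, and only then is Lemma \ref{distwo} invoked. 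That induction is the missing idea, and it is also what handles the later cases. (The purely semisimple case $R=\{e\}$, which the paper dispatches first via algebraicity and finiteness of $\Gamma$, is not addressed in your sketch either.)

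In the case $\mathcal{O}(X) \neq \mathbb C$, your bound $\dim_{\mathbb C} G/J \le 2$ presupposes that the base of the holomorphic reduction is Stein; that is a theorem for solvable $G$ \cite{HO86} and is not available for mixed $G$. Indeed the paper instead quotes the classification of \cite{AG94} for $G/J$, whose list contains non--Stein bases of arbitrarily large dimension (an affine cone minus its vertex, or a $\mathbb C^*$--bundle over one) with $d_{G/J} \le 2$. Moreover Theorem \ref{alggps} concerns algebraic subgroups of linear algebraic groups, so it cannot be applied directly to the pair $(G,J)$ to ``generate'' a parabolic containment; in the paper the parabolic only appears through the flag--manifold factor present in some items of the \cite{AG94} list. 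Finally, the remaining possibilities $G/J \cong \mathbb C$, $\mathbb C^*$, $(\mathbb C^*)^2$ do not end in the contradiction you predict: there the paper argues directly that $G$ is solvable, using the compact torus fibre, or, when $G/J = \mathbb C^*$, the fact that $d_{J/\Gamma} = 1$ together with the same induction on dimension applied to $J$. So as written the proposal does not close either step without the induction and the \cite{AG94} classification.
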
  

\begin{proof} 
First note that $G$ cannot be semisimple.  
If that were so, then $\Gamma$ would be algebraic, hence finite and 
thus $G/\Gamma$ would be Stein.  
But then $2 = d_{G/\Gamma} \ge \dim_{\mathbb C} G/\Gamma = \dim_{\mathbb C} G$ 
which is a contradiction, since necessarily $\dim_{\mathbb C} G \ge 3$ for any complex 
semisimple Lie group $G$.         

\bigskip 
So assume $G = S\ltimes R$ is mixed.   
The proof is by induction on the dimension of $G$.  
Now if a proper parabolic subgroup of $G$ contains $\Gamma$, then a maximal one does too, 
is solvable by induction and so has the special form $B \ltimes R$, where $B$ is isomorphic to 
a Borel subgroup of $S = SL(2,\mathbb C)$. 
But this is impossible by Lemma \ref{distwo}.  

\bigskip  
Lemma \ref{disone} handles the case $\mathcal{O}(G/\Gamma)=\mathbb C$.    
So we assume $\mathcal{O}(G/\Gamma)\not=\mathbb C$ with holomorphic reduction $G/\Gamma \to G/J$. 
The Main Result in \cite{AG94} gives the following list of the possibilities for the base $G/J$:    
\begin{enumerate} 
\item $\mathbb C$    
\item affine quadric $Q_2$     
\item $\mathbb P_2 \setminus Q$, where $Q$ is quadric curve     
\item an affine cone minus its vertex    
\item $\mathbb C^*$--bundle over an affine cone minus its vertex
\end{enumerate}  
 
In case (1) the bundle is holomorphically trivial, with compact fiber a torus and the group that is 
acting effectively is solvable. 
In cases (2) and (3) we have fibrations $G/\Gamma \to G/J \to G/P = \mathbb P_1$ 
and so $\Gamma$ is contained in a proper parabolic subgroup of $G$, contradicting what was shown in the previous paragraph.     

\bigskip 
In order to handle cases (4) and (5) we recall that an affine cone minus its vertex fibers equivariantly 
as a $\mathbb C^*$--bundle over  a flag manifold.  
Thus we get fibrations 
\[  
         G/\Gamma \; \longrightarrow \; G/J \; \longrightarrow \; G/P .  
\]   
Note that it cannot be the case that $G \not= P$, since then $\Gamma$ would be contained in a proper parabolic 
subgroup, a possibility that has been ruled out.     
So $G=P$ and $G/J$ (or a 2--1 covering) is biholomorphic to $\mathbb C^*$ or $(\mathbb C^*)^2$.  
In the second case the fiber $J/\Gamma$ is compact and thus a torus and thus $G$ is solvable. 
If $G/J = \mathbb C^*$, then $J/\Gamma$ is K\"{a}hler with $\dim J < \dim G$ and $d_{J/\Gamma}=1$ by the Fibration Lemma.     
By induction $J$ is solvable and so $G$ is solvable too, because $G/J = \mathbb C^*$.   
\end{proof} 

\subsection{A product decomposition}  
In order to prove the classification we will have need of the next splitting result.  

\begin{prop} \label{torbdle}  
Suppose $G$ is a connected, simply connected solvable complex Lie group that contains a discrete subgroup 
$\Gamma$ such that $G/\Gamma$ is K\"{a}hler and has holomorphic reduction $G/\Gamma \to G/J$  
with base $(\mathbb C^*)^2$ and fiber a torus.  
Then a finite covering of $G/\Gamma$ is biholomorphic to a product.  
\end{prop}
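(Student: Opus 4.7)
The plan is to combine the normality of $J$ in $G$, the abelian structure forced by the torus fiber, and the principal-bundle conclusion of \cite{GO08}, then use the K\"ahler hypothesis to trivialize the resulting bundle after a finite covering.

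First, because $G/J = (\mathbb C^*)^2$ is itself a complex Lie group and $G$ acts transitively by biholomorphisms, the action must factor through the identity component of the automorphism group of $(\mathbb C^*)^2$, which consists of translations only. This produces a surjective holomorphic homomorphism $\varphi\colon G \twoheadrightarrow (\mathbb C^*)^2$ with $\ker\varphi = J$, so $J$ is normal in $G$. Since the fiber $J/\Gamma$ is a torus, its Lie algebra is abelian and $J^0$ is a connected abelian complex Lie group; connectedness of $J/\Gamma$ further gives $J = J^0\Gamma$. Using \cite{GO08}, I may replace $G/\Gamma$ by a finite covering so that $G/\Gamma \to (\mathbb C^*)^2$ becomes a principal $T$-bundle, with $T := J/\Gamma$ a compact complex torus.

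Next, since $G$ is simply connected I lift $\varphi$ through the universal covering $\pi\colon \mathbb C^2 \to (\mathbb C^*)^2$ to a surjective holomorphic homomorphism $\tilde\varphi\colon G \twoheadrightarrow \mathbb C^2$; the homotopy exact sequence forces its kernel to be simply connected, so the kernel equals $J^0 \cong \mathbb C^n$. The pullback of the principal $T$-bundle $G/\Gamma \to (\mathbb C^*)^2$ along $\pi$ is a principal $T$-bundle over the contractible Stein manifold $\mathbb C^2$, and is therefore holomorphically trivial; this realizes a $\mathbb Z^2$-cover of $G/\Gamma$ as $T \times \mathbb C^2$, on which $\mathbb Z^2 = \ker\pi$ acts by $(t,z) \mapsto (t + f_n(z),\, z + 2\pi i n)$ for some holomorphic cocycle $\{f_n\colon \mathbb C^2 \to T\}$.

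Finally, I use the K\"ahler hypothesis to show that, after passing to a finite-index sublattice of $\mathbb Z^2$, the cocycle $\{f_n\}$ is cohomologous to one coming from a $\mathbb C$-linear homomorphism $L\colon \mathbb C^2 \to T$. The $G$-action on $G/\Gamma$ lifts to a $G$-action on $T \times \mathbb C^2$ covering translation on $\mathbb C^2$, which forces $\{f_n\}$ to originate from a group-theoretic section of $\tilde\varphi$; the K\"ahler hypothesis then forces the Lie-algebra extension $0 \to \mathfrak j \to \mathfrak g \to \mathbb C^2 \to 0$ to split, by the same mechanism that rules out the complex Heisenberg group in the compact setting. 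Subtracting $L$ from the $T$-coordinate conjugates the $\mathbb Z^2$-action to the product action, giving $G/\Gamma \cong T \times (\mathbb C^*)^2$ after a finite covering. The main obstacle is precisely this final splitting: without the K\"ahler hypothesis such extensions can be nontrivial (of Heisenberg type), and the proof must use the K\"ahler condition together with the compactness of the fiber $T$ (which allows averaging of the K\"ahler form) to rule these out.
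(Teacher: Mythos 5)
Your proof breaks down at the very first step, and the gap is not repairable by the route you chose. The identification $G/J\cong(\mathbb C^*)^2$ is only an identification of complex manifolds, and it does not follow that the transitive $G$--action is by translations of the group $(\mathbb C^*)^2$: the holomorphic automorphism group of $(\mathbb C^*)^2$ is infinite dimensional, and its identity component contains plenty of non--translations, e.g.\ the one--parameter families $(z,w)\mapsto (z e^{tw},w)$. Concretely, let $B=\mathbb C_a\ltimes\mathbb C_b$ be the two--dimensional Borel group with $(a,b)(a',b')=(a+a',\,b+e^{\lambda a}b')$ and let $\Lambda$ be the discrete subgroup generated by $(\tau,0)$ with $e^{\lambda\tau}=1$ and by $(0,1)$; then $\Lambda\cong\mathbb Z^2$ is \emph{not} normal in $B$, yet $B/\Lambda\cong(\mathbb C^*)^2$. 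So a non--abelian solvable group can act transitively on $(\mathbb C^*)^2$ without acting through translations, and in the situation of the Proposition neither $J$ nor even $J^0$ need be normal in $G$; moreover, even when $J^0$ is normal, the two--dimensional group $G/J^0$ may be this Borel group rather than $\mathbb C^2$. Your conclusion ``$\varphi$ is a homomorphism with kernel $J$, hence $J$ is normal and $J^0\cong\mathbb C^n$ is the kernel of a lift to $\mathbb C^2$'' therefore fails, and with it the cocycle description of $X$ as a quotient of $T\times\mathbb C^2$ by a $\mathbb Z^2$--action covering translations.

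These excluded configurations are precisely where all the work lies: one must treat separately (i) $J^0$ normal with $G/J^0$ abelian, handled by CRS splitting; (ii) $J^0$ normal with $G/J^0$ the Borel group, where one computes $\mathrm{ad}_h$ on the nilradical explicitly and uses that the deck transformation projects to a translation acting trivially on the base to conclude that the conjugation action is trivial; and (iii) $J^0$ not normal, where one passes to the normalizer fibration, rules out $G/N_G(J^0)\cong\mathbb C$ and $(\mathbb C^*)^2$ (the latter via the commutator fibration and the Steinness of its base), and then invokes the Hochschild--Mostow hull and the nilpotency of the Zariski closure of $\Gamma$. Your final paragraph, where the K\"ahler hypothesis is said to split the extension ``by the same mechanism that rules out the complex Heisenberg group,'' is an assertion rather than an argument; in the actual proof the K\"ahler condition enters through the specific facts that $\Gamma$ centralizes $J^0$ and that a finite covering of the holomorphic reduction is a principal Cousin--group bundle \cite{GO08}, not through an averaging of the K\"ahler form. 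Both the normality claim and the final splitting therefore need genuine proofs that your outline does not supply.
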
  

\begin{proof} 
First assume that $J^0$ is normal in $G$ and let $\alpha: G \to G/J^0$ be the quotient homomorphism  
with differential $d\alpha: {\mathfrak g} \to {\mathfrak g}/{\mathfrak j}$.    
Then $G/J^0$ is a two dimensional complex Lie group that is either Abelian or solvable.  
In the Abelian case $G_0 := \alpha^{-1}(S^1 \times S^1)$ is a subgroup of $G$ that has 
compact orbits in $X$, since these orbits fiber as torus bundles over $S^1 \times S^1$ in the base.   
The result now for $(\Gamma, G_0, G)$ follows from  \cite[Theorem 6.14]{GO11}, since this triple is a CRS.  

\bigskip  
Next assume that $G/J^0$ is isomorphic to the two dimensional Borel group $B$ with Lie algebra $\mathfrak b$.     
Then $\mathfrak n := d\alpha^{-1}(\mathfrak n_{\mathfrak b})$ is the nilradical of $G$.  
Let $N$ denote the corresponding connected Lie subgroup of $G$.  
Now choose $\gamma_N \in \Gamma_N := N\cap\Gamma$ such that $\alpha(\gamma_N) \not= e$.  
There exists $x\in\mathfrak n$ such that $\exp(x)=\gamma_N$.  
Let $U$ be the connected Lie group corresponding to $\langle \gamma_N \rangle_{\mathbb C}$.  
Since $\Gamma$ centralizes $J^0$ (see  \cite[Theorem 1]{GO08}), it follows that 
$\mathfrak n = \mathfrak u \oplus \mathfrak j$ and $N= U \times J^0$ is Abelian.  
Set $\Gamma_U := \Gamma \cap U$ and $\Gamma_J :=\Gamma\cap J^0$.  
Then $N/\Gamma_N = U/\Gamma_U \times J^0/\Gamma_J$.  

\bigskip  
Since $\Gamma/\Gamma_N = \mathbb Z$, we may choose $\gamma \in \Gamma$ such that 
$\gamma$ projects to a generator of $\Gamma/\Gamma_N$.  
Also set $A := \exp(\langle w \rangle_{\mathbb C})$ for fixed $w \in \mathfrak g \setminus \mathfrak n$.  
Since $A$ is complementary to $N$, we have $G = A \ltimes N$.  
Now there exist $\gamma_A\in A$ and $\gamma_N\in N$ such that $\gamma = \gamma_A \cdot \gamma_N$.  
Both $\gamma$ and $\gamma_N$ centralize $J^0$ and thus $\gamma_A$ does too.  
Also $\gamma_A = \exp(h)$ for some $h = sw$ with $s\in\mathbb C$.  
Therefore, 
\begin{eqnarray} \label{central}
      [h,\mathfrak j] \; = \; 0 .   
\end{eqnarray}   

\bigskip  
Since $\mathfrak a + \mathfrak u$ is isomorphic to $\mathfrak b = \mathfrak g/\mathfrak j$ as a vector space,   
there exists $e\in\mathfrak u$ such that 
\[  
             [d\alpha(h),d\alpha(e)] \; = \; 2 d\alpha(e) .  
\]  
Let $\{ e_1, \ldots, e_{n-2}\}$ be a basis for $\mathfrak j$.  
There exist structure constants $a_i$ so that 
\[  
           [h,e] \; = \; 2e \; + \; \sum_{i=1}^{n-2} a_i e_i 
\]  
and the remaining structure constants are all 0 by (\ref{central}).   
Note that, conversely, any choice of the structure constants $a_i$ determines a solvable Lie algebra $\mathfrak g$ 
and the corresponding connected simply--connected complex Lie group $G = A \ltimes N$.  

\bigskip 
We now compute the action of $\gamma_A\in A$ on $N$ by conjugation.   
In order to do this note that the adjoing representation restricted to $\mathfrak n$, i.e., the map ${\rm ad}_h : \mathfrak n \to \mathfrak n$, 
is expressed by the matrix 
\[  
       M \; := \; [{\rm ad}_h] \; = \; \left( \begin{array}{cccc}  2 & 0 & \ldots & 0 \\  
                              a_1 & 0 & \ldots & 0 \\  
                              \vdots & \vdots & \ddots & \vdots \\ 
                              a_{n-2} & 0 & \ldots & 0 \end{array} \right)   
\]  
So the action of $A$ on $N$ is through the one parameter group of linear transformations $ t \mapsto e^{tM}$ for $t\in\mathbb C$.  
For $k\geq 1$
\[   
              (tM)^k  \; = \; \frac{1}{2} (2t)^k M,
\]   
and it follows that     
\[   
      e^{tM} \; = \; \frac{1}{2}(e^{2t}-1)M \; + \; {\rm Id} .   
\]   
Now the projection of the element $\gamma_A$ 
acts trivially on the base $Y = G/J$, so $t=\pi i k$ where $k\in\mathbb Z$. 
Hence $\gamma_A$ acts trivially on U.
Also $\gamma_N$ acts trivially on $N$, since $N$ is Abelian.   
Thus $\gamma$ acts trivially on $N$ and as a consequence, 
although $G$ is a non-Abelian solvable group
the manifold $X=G/\Gamma$ is just the quotient of $\mathbb C^n$ by a discrete additive subgroup of rank $2n-2$. 
Its holomorphic reduction is the original torus bundle which, since we are now dealing with the Abelian case, is trivial.

\medskip  
Now assume $J^0$ is not normal in $G$, set $N := N_G(J^0)$, and let 
$
    G/J \; \xrightarrow{N/J} \; G/N
$
be the normalizer fibration.
Since the base $G/N$ of the normalizer fibration is an orbit in some projective space,     
$G/N$ is holomorphically separable and thus Stein \cite{HO86}.         
Since we also have $2 \ge d_{G/N} \ge \dim_{\mathbb C} G/N$ we see that 
$G/N \cong \mathbb C$, $\mathbb C^*$ or $\mathbb C^*\times \mathbb C^*$.      
We claim that we must have $G/N = \mathbb C^*$, i.e., we have to eliminate the other two possiblities.  
Assume $G/N \cong \mathbb C$.   
Since $d_X \leq 2$ the Fibration Lemma implies $d_{N/J} = 0$, i.e., $N/J$ is biholomorphic to a torus $T$. 
Thus $G/J = T\times \mathbb C$.
However, we assume that $G/J = \mathbb C^*\times\mathbb C^*$ and 
this gives us a contradiction.
Now assume $G/N \cong \mathbb C^* \times \mathbb C^*$.
By Chevalley's theorem \cite{Che51} the commutator group $G'$ acts algebraically.    
Hence the $G'$--orbits are closed and one gets the commutator fibration 
$\displaystyle   
        G/N \xrightarrow{\mathbb C} G/G'\cdot N.
$
Since $G$ is solvable, it follows that $G'$ is unipotent and the $G'$--orbits are cells, i.e.,  $G'\cdot N/N \cong \mathbb C$.
By the Fibration Lemma the base of the commutator fibration is a torus. 
But it is proved in \cite{HO81}  
that the base of a commutator fibration is always Stein which is a contradiction.
This proves the claim that $G/N \cong \mathbb C^*$ and by the Fibration Lemma $d_{N/J} = 1$ and hence $N/J = \mathbb C^*$ 

\medskip    
Since $G$ is simply connected, $G$ admits a Hochschild-Mostow hull (\cite{HM64}),   
i.e., there exists a solvable linear algebraic group 
\[
       G_a \; = \; (\mathbb C^*)^k \ltimes G
\]
that contains $G$ as a Zariski dense, topologically closed, normal complex subgroup.   
By passing to a subgroup of finite index we may, without loss of generality,     
assume the Zariski closure $G_a (\Gamma)$ of $\Gamma$ in $G_a$ is connected.   
Then $G_a (\Gamma) \supseteq J^0$ and $G_a (\Gamma)$ is nilpotent  \cite{GO08}.   
Let $\pi: \widehat{G_a} (\Gamma) \to G_a (\Gamma)$ be the universal covering and     
set $\widehat{\Gamma} := \pi^{-1}(\Gamma)$.       
Since $\widehat{G_a} (\Gamma)$ is a simply connected, nilpotent, complex Lie group, the exponential map from the 
Lie algebra ${\mathfrak g}_a (\Gamma)$ to $\widehat{G_a} (\Gamma)$ is bijective.   
For any subset of $\widehat{G_a} (\Gamma)$ and, in particular for the subgroup       
$\widehat{\Gamma}$, the smallest closed, connected, 
complex (resp. real) subgroup $\widehat{G_1}$ (resp. $\widehat{G_0}$) of $\widehat{G_a} (\Gamma)$
containing $\widehat{\Gamma}$ is well-defined.   
By construction $\widehat{G_0}/\widehat{\Gamma}$ is compact, e.g., see  \cite[Theorem 2.1]{Rag72}.  
Set $G_1 :=\pi(\widehat{G_1})$ and $G_0 :=\pi(\widehat{G_0})$ and consider the CRS manifold $(G_1 , G_0 , \Gamma)$.       
Note that the homogeneous CR--manifold $G_0/\Gamma$ 
fibers as a $T$--bundle over $S^1\times S^1$.  
In order to understand the complex structure on the base $S^1\times S^1$   
of this fibration consider the diagram     
\begin{equation*}
  \begin{array}{rcccc}  
  \widehat{G}_0/ \widehat{\Gamma} & \subset &  \widehat{G}_1 / \widehat{\Gamma}  &   
  \subseteq  & \widehat{G}_a(\Gamma) / \widehat{\Gamma}  \\  
  & & & & \\  
  || & & ||  &  &  ||   \\ 
  & & & & \\  
   G_0/\Gamma & \subset &  G_1 / \Gamma  &   \subseteq  & G_a(\Gamma) /\Gamma \\  
   & & & & \\  
   T \downarrow  & & T \downarrow & & \downarrow T \\   
   & & & & \\  
   S^1 \times S^1 \; = \; G_0 / G_0 \cap (J^0 \cdot \Gamma) & \subset & G_1 / J^0\cdot\Gamma  & \subseteq  & G_a / J^0\cdot\Gamma
 \end{array}
\end{equation*}   
As observed in \cite[Theorem 1]{GO08}, the manifold $G_a / J^0\cdot\Gamma$   
is a holomorphically separable solvmanifold and thus is Stein \cite{HO86}.  
So $G_1 / J^0 \cdot \Gamma$ is also Stein and thus 
$G_0 / G_0 \cap (J^0 \cdot \Gamma) $ is totally real in $G_1/J^0\cdot \Gamma$.  
The corresponding complex orbit $G_1 / J^0\cdot\Gamma $ is then 
biholomorphic to $\mathbb C^*\times\mathbb C^*$.  
It now follows by \cite[Theorem 6.14]{GO08} that a finite covering of $G_1/\Gamma$ splits as a product 
of a torus with $\mathbb C^*\times\mathbb C^*$ and, in particular, that 
a subgroup of finite index in $\Gamma$ is Abelian.  
 
\medskip    
Now set $A := \{ \ \exp\ t\xi \ | \ t \in\mathbb C \ \}$, where $\xi \in \mathfrak g \setminus \mathfrak n$ 
and $\mathfrak n$ is the Lie algebra of $N^0$.  
Then $G = A \ltimes N^0$ and any $\gamma\in\Gamma$ can be written as $\gamma = \gamma_A . \gamma_N$ 
with $\gamma_A\in A$ and $\gamma_N\in N$.   
The fiber $G/\Gamma \to G/N$ is the $N^0$-orbit of the neutral point and $\Gamma $
acts on it by conjugation.  
Since $N/\Gamma$ is K\"{a}hler and has two ends, it follows by    
\cite[Proposition 1]{GOR89} that a finite covering of $N/\Gamma$ is biholomorphic to a product  
of the torus and $\mathbb C^*$.   
(By abuse of language we still call the subgroup having finite index $\Gamma$.)    
Now the bundle $G/\Gamma \to G/N$ is associated to the bundle 
\[  
        \mathbb C \; = \; G/N^0 \; \longrightarrow \; G/N \; = \; \mathbb C^*     
\]  
and thus $G/\Gamma = \mathbb C \times_{\rho} (T \times \mathbb C^*)$,     
where $\rho : N/N^0 \to {\rm Aut } (T \times \mathbb C^*)$ is the adjoint representation. 
Since $\Gamma$ is Abelian, this implies $\gamma_A$ acts trivially on $\Gamma_N := \Gamma \cap N^0$.   
Now suppose $J$ has complex dimension $k$.   
Then $\gamma_A$ is acting as a linear map on $N^0 = \mathbb C \ltimes J^0 = \mathbb C^{k+1}$ and 
commutes with the additive subgroup $\Gamma_N$ that has rank $2k+1$ and spans $N^0$ as a linear space.        
This implies $\gamma_A$ acts trivially on $N^0$ and, as a consequence, the triviality of 
a finite covering of the bundle, as required.    
\end{proof}

\subsection{The classification for discrete isotropy}\label{maintheorem}

In the following we classify K\"{a}hler $G/\Gamma$ when $\Gamma$ is discrete and $d_X \le 2$.   
Note that $d_X=0$ means $X$ is compact and this is the now classical result of 
Borel--Remmert \cite{BR62} and the case $d_X = 1$ corresponds to $X$ having 
more than one end and  was handled in \cite{GOR89}.          

\begin{thm} [\cite{Ahm13}]  \label{disc}
Let $G$ be a connected simply connected complex Lie group and $\Gamma$ a discrete 
subgroup of $G$ such that $X := G/\Gamma$ is K\"{a}hler and $d_X \le 2$.  
Then $G$ is solvable and a finite covering of $X$ is biholomorphic to a product $C \times A$, where $C$ 
is a Cousin group and $A$ is $\{ e \}, \mathbb C^*$, $\mathbb C$, or $(\mathbb C^*)^2$.   
Moreover, $d_X = d_C + d_A$.   
\end{thm}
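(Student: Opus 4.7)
\emph{Approach.} The plan is to combine the preceding results of this section---Proposition \ref{solvgp} forcing $G$ to be solvable and Proposition \ref{torbdle} handling a $(\mathbb{C}^*)^2$-base---with an analysis of the holomorphic reduction of $G/\Gamma$ and a sheaf-cohomological triviality argument for the remaining bases.

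\emph{Main steps.} First apply Proposition \ref{solvgp} to conclude that $G$ is solvable, and assume $G$ is simply connected. Let $p\colon G/\Gamma \to G/J$ be the holomorphic reduction; by \cite{OR88} the fiber $J/\Gamma$ is a Cousin group, and by \cite{HO86} the base $G/J$ is Stein. Proposition \ref{SerreStein} together with the Fibration Lemma then gives $\dim_{\mathbb{C}} G/J \le d_{G/J} \le d_X \le 2$. Enumerating the Stein solvmanifolds of complex dimension at most two via the commutator fibration, and using additivity of $d$ on products together with $d_{\mathbb{C}}=2$, $d_{\mathbb{C}^*}=1$, $d_{\mathbb{C}^2}=4$, and $d_{\mathbb{C}\times \mathbb{C}^*}=3$, the only candidates for $G/J$ with $d_{G/J}\le 2$ are $\{e\}$, $\mathbb{C}$, $\mathbb{C}^*$, and $(\mathbb{C}^*)^2$. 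Passing to a finite covering we may also assume, by \cite{GO08}, that $p$ is a principal Cousin-group bundle.

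\emph{Case analysis.} If $G/J=\{e\}$, then $X$ is itself a Cousin group and we take $A=\{e\}$. If $G/J$ equals $\mathbb{C}$ or $\mathbb{C}^*$, write the fiber as $C=\mathbb{C}^k/\Lambda$; from the exact sequence of sheaves $0 \to \underline{\Lambda} \to \mathcal{O}^k \to \mathcal{O}_C \to 0$ together with Cartan B on the Stein base we get $H^1(G/J,\mathcal{O}_C)\cong H^2(G/J,\underline{\Lambda})$, which vanishes since $\mathbb{C}$ is contractible and $\mathbb{C}^*\simeq S^1$ has no $H^2$; hence the principal $C$-bundle is holomorphically trivial and $X\cong C\times(G/J)$. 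If $G/J=(\mathbb{C}^*)^2$, then $d_{G/J}=2=d_X$ forces $d_{J/\Gamma}=0$, so the fiber is compact and thus a torus, and Proposition \ref{torbdle} supplies the product decomposition after a further finite covering. The equality $d_X = d_C+d_A$ then follows in each case from the Fibration Lemma applied to the now-trivial bundle.

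\emph{Main obstacle.} The most delicate situation is the $(\mathbb{C}^*)^2$ base, where $H^2((\mathbb{C}^*)^2,\mathbb{Z})\ne 0$ so the sheaf-theoretic triviality argument breaks down; one genuinely needs the Hochschild--Mostow hull and the CRS analysis carried out in Proposition \ref{torbdle}. A secondary technical point is ensuring that all the finite coverings produced along the way---for the principal Cousin-bundle structure, for the bundle trivialization, and for the splitting in the $(\mathbb{C}^*)^2$ case---can be amalgamated into one common finite cover of $X$ on which the final product decomposition takes place.
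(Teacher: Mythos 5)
Your proposal follows essentially the same route as the paper's proof: Proposition \ref{solvgp} for solvability, the holomorphic reduction with Cousin-group fiber \cite{OR88} and Stein base \cite{HO86}, passage to a finite cover on which the bundle is principal \cite{GO08}, triviality of the bundle over $\mathbb{C}$ and $\mathbb{C}^*$ (the paper invokes Grauert's Oka principle \cite{Gra58}, while your direct computation $H^1(G/J,\mathcal{O}_C)\cong H^2(G/J,\underline{\Lambda})=0$ is just the standard proof of that principle for an abelian structure group), and Proposition \ref{torbdle} for the $(\mathbb{C}^*)^2$ base. The one point to tighten is the enumeration of possible bases: the list from \cite{AG94} also contains the complex Klein bottle, which your product-plus-additivity count misses since it is not a product; as in the paper, it is handled by passing to its $(\mathbb{C}^*)^2$ double cover, so the conclusion is unaffected.
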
    

\begin{proof}
By Proposition \ref{solvgp} $G$ is solvable.       
If $\mathcal{O}(X)\cong\mathbb C$, then $X$ is a Cousin group \cite{OR88}.   
Otherwise,  $\mathcal{O}(X)\neq\mathbb C$ and let   
\begin{displaymath}
    \xymatrix{
          G/\Gamma \ar[r]^{J/\Gamma} &  G/J}
\end{displaymath} 
be the holomorphic reduction.         
Its base $G/J$ is Stein  \cite{HO86}, its fiber $J/\Gamma$ is biholomorphic to a Cousin group  \cite{OR88}, and    
a finite covering of the bundle is principal \cite{GO08}.       
Since $G/J$ is Stein, by Proposition \ref{SerreStein} one has 
\[   
       \dim_{\mathbb C} G/J \; \leq \; d_{G/J} \; \leq \; d_X\; \leq  \; 2  .     
\]       

\medskip
If $d_X=1$, then $d_{G/J}=1$ and $G/J$ is biholomorphic to $\mathbb C^*$.
A finite covering of this bundle is principal, with the connected Cousin group as structure group,  
and so is holomorphically trivial   \cite{Gra58}.   
If $d_X=2$, the Fibration Lemma implies $G/J\cong \mathbb C$, 
$\mathbb C^*$, $\mathbb C^*\times \mathbb C^*$ or a complex Klein bottle \cite{AG94}.   
The case of $\mathbb C^*$ is handled as above and a torus bundle over $\mathbb C$ is trivial by Grauert's Oka Principle \cite{Gra58}.   
Finally, since a Klein bottle is a 2-1 cover of $\mathbb C^*\times\mathbb C^*$,   
it suffices to consider the case $\mathbb C^*\times\mathbb C^*$.  
That case is handled by Proposition \ref{torbdle}.    \end{proof}

\section{Proof of the Main Result when  ${\mathcal O}(X) = \mathbb C$}

\begin{proof}        
Let $\pi: G/H \to G/N$ be the normalizer fibration.  
We claim that $G'$ acts transitively on $Y := G/N$.   
In order to see this consider  the commutator fibration $G/N \to G/N\cdot G'$  which exists by Chevalley's theorem, see \cite{Che51}, and 
recall that its base $G/N\cdot G'$ is an Abelian affine algebraic group that is Stein \cite{HO81} and so must be a point.     
Otherwise, one could pullback non--constant holomorphic functions to $X$ contradicting the assumption ${\mathcal O}(X) = \mathbb C$.   
Since $G'$ acts on $G/N$ as an algebraic group of transformations and $d_{G/N} \le d_X = 2$,       
there is a parabolic subgroup $P$ of $G'$ containing $N\cap G'$ (see  \cite{Akh83} or Theorem \ref{alggps}) 
and we now consider the fibration $Y = G'/N\cap G' \to G'/P$.  

\medskip  
First we assume $G/N$ is compact and thus a flag manifold, i.e., $N\cap G'=P$ and suppose ${\mathcal O}(N/H) = \mathbb C$.  
Then $N/H$ is a Cousin group by Theorem \ref{disc}.      
The structure in this case is given in  Proposition \ref{Cousinoverflag}: $X$ fibers as a $(\mathbb C^*)^k$--bundle 
over a product $C \times Q$ with $d_C + k = 2$ with $C$ a Cousin group and $Q$ a flag manifold.     

\medskip 
Next suppose $G/N$ compact and ${\mathcal O}(N/H) \not= \mathbb C$  with holomorphic reduction $N/H \to N/I$. 
The possibilities for $N/H$ have  been presented in Theorem \ref{disc};   
$N/H$ is a solvmanifold,  $N/I$ is Stein \cite{HO86} and $I/H$ is a Cousin group \cite{OR88}.  
So we get the following cases and we claim that none of these can actually occur:   
\begin{itemize}
\item []  (i) 
 $N/I = \mathbb C^*$ and $I/H =: C$ is a Cousin group of hypersurface type.  
\item [] (ii) 
$N/I = (\mathbb C^*)^2$ and $I/H = T$ is a torus.  
\item [] (iii) $N/I = \mathbb C$ and $I/H = T$ is a torus.  
\end{itemize}  
   
\medskip 
In (i) and (ii) we have ${\mathcal O}(G/I) \not=\mathbb C$ which contradicts ${\mathcal O}(X) = \mathbb C$.  
The same contradiction occurs in (iii).    
Let $I := {\rm Stab}_G(T)$.  
Then $N$ normalizes $I$, since the partition of $T\times\mathbb C$ by the cosets of $T$ is $N$--invariant.  
Thus $N/I \cong\mathbb C$ as groups.  
Now $S$ is not transitive on $G/I$ since this would give an affine bundle $S/S\cap I \to S/P$ with 
$S\cap I$ not normal in $P$ \cite[Proposition 1 in \S 5.2]{AHR85}.   
Therefore the $S$--orbits in $G/I$ are sections, 
the bundle $G/I \to G/N$ is holomorphically trivial and ${\mathcal O}(G/I) \not=\mathbb C$. 
However, this once again contradicts ${\mathcal O}(X) = \mathbb C$.

\medskip  
Now suppose $d_{G/N} = 1$.  
As noted above, $G'$ acts algebraically and transitively on $G/N$ and  
$G/N$ is an affine cone minus its vertex or simply $\mathbb C^*$.     
Thus ${\mathcal O}(G/N) \not= \mathbb C$ contradicting ${\mathcal O}(X) =\mathbb C$.   

\medskip  
Suppose $d_{G/N}=2$ and a finite covering of $P/N\cap G'$ is biholomorphic to $ (\mathbb C^*)^2$.  
An intermediate fibration has base an affine cone minus its vertex and there exist       
non--constant holomorphic functions with the same contradiction as before.     

\medskip  
Finally, suppose $d_{G/N} = 2$ and $P/N\cap G' = \mathbb C$.  
Then there are two possibilities and we first   suppose $S$ acts transitively on $Y$.     
Let $x_0$ be any point in $X$ and $z_0\in S/I$ be its projection and consider the $S$--orbit 
$S . x_0 = S/S \cap H$ through the point $x_0$.  
There is an induced fibration  
\[  
    \begin{array}{ccc}  S/S\cap H & \hookrightarrow & G/H \\  
                                   & & \\  
                                    A \downarrow & & \downarrow F_{z_0}  \\  
                                    & & \\  
                                    S/I & = & G/N  \end{array}     
\]  
with $I$ an algebraic subgroup of $S$ and $F_{z_0} = T$ a compact complex torus by the Fibration Lemma.   
Since $S/S\cap H$ is K\"{a}hler, $S\cap H$ is an algebraic subgroup of $S$.  
Then the holomorphic bundle $S/S\cap H \to S/I$ has the algebraic variety $I/S\cap H$ as fiber $A$ 
and this is a closed subgroup of the torus $T$.  
But this is only possible if $I/S\cap H$ is finite. 
Since we have the fibration $S/I \to S/P$ with $P/I = \mathbb C$ and $S/P$ a flag manifold, we see 
that $S/I$ is simply connected.   
As a consequence, every $S$--orbit in $X$ is a holomorphic section and 
we conclude that $X$ is the product of $T$ and $S/I$.   

\medskip    
Otherwise, $S$ does not act transitively on $G/N$.  
The radical $R_{G'}$ of $G'$ is a unipotent group acting algebraically on $G/N$ yielding a fibration 
\[  
              Y \; = \; G/N \; \stackrel{F}{\longrightarrow} \; G/N\cdot R_{G'} 
\]   
where $F = \mathbb C^p$ with $p > 0$.     
The Fibration Lemma and the assumption $d_X = 2$ imply $N/H$ is compact, thus a torus, 
$F = \mathbb C$ and $Z := G/N\cdot R_{G'}$ is compact and thus a flag manifold.         
Now $Y \not= \mathbb C \times Z$ because one would then have   
${\mathcal O}(Y) \not=\mathbb C$ contrary to ${\mathcal O}(X) =\mathbb C$.       
So $Y$ is a non--trivial line bundle over $Z$ and there are two $S$--orbits in $Y$,  
a compact one $Y_1$ which is the zero section of the line bundle and is biholomorphic to $Z$ and an open one $Y_2$.    
The latter holds, since the existence of another closed orbit would imply the triviality of the 
$\mathbb C^*$--bundle $Y \setminus Y_1$ over $Z$.  
We write $X$ as a disjoint union $X_1 \cup X_2$ with $X_i := \pi^{-1}(Y_i)$ for $i=1,2$.  
Then $X_1$ is a K\"{a}hler torus bundle over $Z$ and is trivial by \cite{BR62}.  
And a finite covering of $X_2$ is also trivial since $X_2$ is K\"{a}hler and satisfies $d_{X_2} = 1$ \cite[Main Theorem, Case (b)]{GOR89}.    
Note that  the $S$--orbits in $X_1$ (resp. $X_2$) are holomorphic sections of the torus bundle lying over the corresponding   
$S$--orbit  $Y_1$ (resp. $Y_2$).         

\medskip  
Let $x_2 \in X_2$ and  $M_2 := S . x_2$.   
Since $X$ is K\"{a}hler, the boundary of $M_2$ consists of $S$--orbits of strictly smaller dimension 
\cite[Theorem 3.6]{GMO11}, and for dimension reasons, these necessarily lie in $X_1$.  
Let $M_1 \subset \overline{M_2}$ be such an $S$--orbit in $X_1$ and let $p \in M_1$.  
As observed in the previous paragraph, $M_1$ is biholomorphic to $Y_1$ which is a flag manifold.  
Therefore, $M_1 = K . p = K/L$, where $K$ is a maximal compact subgroup of $S$ and $L$ is the   
corresponding isotropy subgroup at the point $p$ and is compact.  
The $L$--action at the $L$--fixed point $p$ can be linearized, i.e.    
this $L$--action leaves invariant the complex vector subspaces $T_p(K/L)$ and $T_p(\pi^{-1}(\pi(p)))$ as well as 
a complementary complex vector subspace $W$ of $T_p(X)$.   
Now $W \oplus T_p(K/L)$ is the tangent space to $\overline{M_2}$ and is a complex vector space.   
This implies that $M_2$ is the unique $S$--orbit that contains $M_1$ in its closure and also that 
$\overline{M_2} = M_2 \cup M_1$ is a complex submanifold of $X$ that is a holomorphic section of 
the bundle $\pi:X \to Y$.  
This bundle is thus trivial and $X$ is biholomorphic to the product $T \times Y$.     

 \medskip   
This completes the classification if ${\mathcal O}(X) = \mathbb C$.  \end{proof}

\section{Proof of the Main Result when ${\mathcal O}(X) \not= \mathbb C$}    
   
We first prove a generalization of Proposition \ref{torbdle} for arbitrary isotropy.  

\begin{prop} \label{solvcase}   
Let $G$ be a connected, simply connected, solvable complex Lie group and $H$ a closed complex subgroup of $G$   
with $G/H$ K\"{a}hler and $G/H \to G/J $  its holomorphic reduction with fiber $T = J/H$ a compact complex torus  
and base $G/J = (\mathbb C^*)^2$.  
Then a finite covering of $G/H$ is biholomorphic to $T \times (\mathbb C^*)^2$.  
\end{prop}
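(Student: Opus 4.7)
The strategy is to reduce to Proposition \ref{torbdle} by passing to the quotient by $H^0$, thereby obtaining a discrete-isotropy homogeneous space.

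Since $T = J/H$ is abelian, $J' \subseteq H$, so $H$ is a closed complex normal subgroup of $J$ and hence so is its identity component $H^0$. First suppose $H^0$ is normal in all of $G$. Then $\bar G := G/H^0$ is a connected solvable complex Lie group; applying the long exact homotopy sequence to the fibration $H^0 \to G \to \bar G$ with $H^0$ connected and $G$ simply connected shows that $\bar G$ is simply connected. The image $\bar H := H/H^0$ is discrete in $\bar G$, and $\bar J := J/H^0$ is a closed complex subgroup containing $\bar H$ with $\bar J/\bar H = T$ and $\bar G/\bar J = G/J = (\mathbb C^*)^2$. Since ${\mathcal O}(\bar G/\bar H) \cong {\mathcal O}(G/H) \cong {\mathcal O}(G/J)$, the map $\bar G/\bar H \to \bar G/\bar J$ is the holomorphic reduction of $\bar G/\bar H$, so Proposition \ref{torbdle} applies and yields that a finite cover of $\bar G/\bar H = G/H$ is biholomorphic to $T \times (\mathbb C^*)^2$.

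To handle the general case I would consider the normalizer $N := N_G(H^0)$; since $H^0 \triangleleft J$, we have $J \subseteq N$, so the base $G/N$ of the normalizer fibration is a holomorphic quotient of $(\mathbb C^*)^2 = G/J$ by the closed complex subgroup $N/J$, hence is either a point, $\mathbb C^*$, or $(\mathbb C^*)^2$. If $N = G$, the previous paragraph concludes the argument. If $N \subsetneq G$, I would use the normalizer fibration $G/H \to G/N$ together with the Hochschild--Mostow hull construction and the CRS manifold analysis from \cite{GO08} that was applied in the last part of the proof of Proposition \ref{torbdle}: embed $G$ into its algebraic hull $G_a = (\mathbb C^*)^k \ltimes G$, pass to the universal cover of the Zariski closure of $H$ in $G_a$, and use that $H^0 \triangleleft J$ together with $T$ compact to produce the totally real/complex orbit pair $(G_0, G_1)$ required to invoke \cite[Theorem 6.14]{GO08}.

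The main obstacle is this final step: extending the CRS manifold argument, originally formulated for discrete isotropy $\Gamma$, to accommodate positive-dimensional $H^0$. The crucial feature making this extension possible is that $H^0$ is normal in $J$ and the fiber $T = J/H$ is a compact torus, so the Zariski closure of $H$ in the Hochschild--Mostow hull remains tractable and the induced complex/CR structure on the analog of $G_0/\Gamma$ still fibers as a torus bundle over $S^1 \times S^1$; the rest of the CRS argument should then carry over essentially verbatim, giving the desired product decomposition up to finite covering.
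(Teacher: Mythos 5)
Your first paragraph (the case $H^0$ normal in $G$) is correct and is essentially how the paper disposes of that case; your observation that $J'\subseteq H$ forces $H^0\triangleleft J$ and hence $J\subseteq N:=N_G(H^0)$ is a clean way to get the containment the paper obtains via holomorphic separability of $G/N$. The gap is the case $N\subsetneq G$, which you explicitly leave as an ``obstacle'': you propose to rerun the Hochschild--Mostow/CRS argument from the end of the proof of Proposition \ref{torbdle} with the discrete subgroup $\Gamma$ replaced by the positive-dimensional $H$, and assert that it ``should carry over essentially verbatim.'' It does not, at least not without a genuine reduction: the inputs to that argument --- the nilpotency of the Zariski closure $G_a(\Gamma)$ from \cite[Theorem 1]{GO08}, the compactness of $\widehat{G}_0/\widehat{\Gamma}$ from \cite[Theorem 2.1]{Rag72}, and the splitting theorem \cite[Theorem 6.14]{GO08} for the CRS $(G_1,G_0,\Gamma)$ --- are all formulated for discrete isotropy, and you supply no mechanism for passing to that setting. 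This is exactly the step that requires an idea, so as written the proposal does not prove the proposition. (A secondary point: $G/J\cong(\mathbb C^*)^2$ is a biholomorphism, not a priori a group isomorphism compatible with the fibration $G/J\to G/N$, so ``quotient by the closed complex subgroup $N/J$'' needs justification; the paper rules out $G/N=(\mathbb C^*)^2$ and $\mathbb C$ by the commutator-fibration and Fibration Lemma arguments from Proposition \ref{torbdle}, and disposes of the case $J=N$ by a separate contradiction involving $N_G(J^0)$.)

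The paper's resolution of the remaining case is different and avoids generalizing anything to non-discrete isotropy. Once one knows $G/N=\mathbb C^*$ and $N/J=\mathbb C^*$, the fiber $N/H$ is K\"ahler with two ends, hence (after a finite cover) is the complex abelian Lie group $B:=\mathbb C^*\times T$ by \cite[Proposition 1]{GOR89}, and a finite cover of $G/H\to G/J$ is a principal $T$-bundle by \cite[Theorem 1]{GO08}. Choosing a one-parameter group $A=\exp\langle\xi\rangle_{\mathbb C}$ with $\xi\in\mathfrak g\setminus\mathfrak n$, the group $\widehat{G}:=A\ltimes B$ acts holomorphically and transitively on $X$ --- $A$ from the left, $B$ from the right on the principal bundle $G/H\to G/N$ --- and for dimension reasons the isotropy of this new action is \emph{discrete}. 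Proposition \ref{torbdle} then applies verbatim to $\widehat{G}$. This ``change the transitive group so the isotropy becomes discrete'' step is the missing idea in your sketch; with it, no extension of the CRS machinery is needed.
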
  

\begin{proof}  
If $H^0$ is normal in $G$, then this is Proposition \ref{torbdle}.  
Otherwise, let $N := N_G(H^0)$ and consider $G/H \to G/N$.  
Because $G/N$ is an orbit in some projective space,  
$G/N$ is holomorphically separable and the map $G/H \to G/N$ factors through the holomorphic reduction, i.e., $J \subset N$.   
We first assume that $J=N$ and consider now $\widehat{N} := N_G(J^0)$.  
Then the argument given in the third last paragraph of the proof of Proposition \ref{torbdle} shows that 
$G/\widehat{N} = \mathbb C^*$ and $\widehat{N}/J = \mathbb C^*$.   
But then (a finite covering of)  $\widehat{N}/H$ is isomorphic to $\mathbb C^* \times T$, see \cite[Proposition1]{GOR89}, i.e., 
that $H^0$ is normal in $\widehat{N}$.     
This gives the contradiction that $\widehat{N} = N$ while $\dim \widehat{N} > \dim J = \dim N$.  

\medskip 
So we are reduced to the setting where, after going to a finite covering if necessary,  $G/N = \mathbb C^*$ and   
$N/H \cong \mathbb C^* \times T$ is an Abelian complex Lie group,   
since $N/H$ is K\"{a}hler with two ends, see \cite[Proposition 1]{GOR89}.           
We have the diagram 
\[  
    \begin{array}{rcl}  X \; = \; G/H & \stackrel{T}{\longrightarrow} & G/J \; = \; \mathbb C^* \times \mathbb C^* \\  
    & & \\ 
    \searrow & & \swarrow \\  
    & & \\ 
    & G/N_{G}(H^0) \; = \; \mathbb C^* & \end{array} 
\]       
Since the top line is the holomorphic reduction and $X$ is K\"{a}hler,   
a finite covering of this bundle is a $T$--principal bundle, see \cite[Theorem 1]{GO08}.   
Choose $\xi \in \mathfrak g \setminus \mathfrak n$ and set $A := \exp \langle \xi \rangle_{\mathbb C}$    
and $B := N_G(H^0)/H \cong \mathbb C^* \times T$.  
Then the group $\widehat{G} := A \ltimes B$ acts holomorphically and transitively on $X$, where 
$A$ acts from the left and $B$ acts from the right on the principal $\mathbb C^*\times T$--bundle $G/H \to G/N$.  
For dimension reasons the isotropy of this action is discrete and the result now follows by Proposition \ref{torbdle}.    
\end{proof}   

%%%%%%%%%%%%%%%%%  

\begin{proof}[Proof of the Main Result when ${\mathcal O}(X) \not= \mathbb C$]        
Let $G/H \to G/J$ be the holomorphic reduction.  
In \cite{AG94} there is a list of the possibilities for $G/J$ which was also given above     
in the proof of Proposition \ref{solvgp}.  
We now employ that list to determine the structure of $X$.  

\medskip  
Suppose $G/J = \mathbb C$.  
By the Fibration Lemma $J/H$ is compact, K\"{a}hler, and so biholomorphic to the product 
of a torus $T$ and a flag manifold $Q$.   
Thus $X = T \times Q \times \mathbb C$ by \cite{Gra58}.     

\medskip    
Suppose $G/J$ is an affine quadric.  
By the Fibration Lemma we again have  $J/H = T \times Q$.     
Then $X$ is biholomorphic to a product, since $G/J$ is Stein and is simply connected \cite{Gra58}.  

\medskip    
If $G/J$ is the complement of the quadric curve in $\mathbb P_2$,     
then a two--to--one covering of $G/J$ is the affine quadric and the pullback of $X$ to that space 
is again a product, as in the previous case.   

\medskip    
Suppose $G/J$ is a  $\mathbb C^*$--bundle over an affine cone minus its vertex and so $d_{G/J}=2$   
and let $G = S\ltimes R$ be a Levi--Malcev decomposition of $G$.         
By the Fibration Lemma $J/H$ is compact and $J/H$ inherits a K\"{a}hler structure from $X$.        
If we set $N := N_J(H^0)$, then the normalizer fibration $J/H \to J/N$ is a product with 
$N/H = T$ and $J/N = Q$, i.e., $J/H = T \times Q$, see \cite{BR62}.  
Now $S$ is acting transitively on $G/N$ and by pulling the bundle back to the universal covering of $G/J$, 
without loss of generality, we may assume that $G/N$ is simply connected.  
Consider the bundle $G/H \to G/N$.  
Its fibers are biholomorphic to $T$ and the intersection of every $S$--orbits with this fiber $N/H$ is biholomorphic to $Q$.       
Since the $S$--orbits map onto $G/J$, it    
follows that the $S$--orbits are sections of the bundle $G/H \to G/N$ and the bundle is holomorphically trivial.    
Hence a finite covering of $X$ is biholomorphic to $T \times (S/S\cap N)$.     
Example \ref{exdoubleCstar/flag} shows that the $Q$--bundle $S/S\cap N \to S/S\cap J$ is not necessarily trivial.  

\medskip     
Suppose the base of the holomorphic reduction of $G/H$ is $G/J = (\mathbb C^*)^2$, i.e., there 
is no flag manifold involved.  
We assume that $G$ is simply connected and has a Levi--Malcev decomposition $G = S \ltimes R$.  
Let $p: G/H \to G/J$ be the bundle projection map.  
We now fiber $J/H$ in terms of its $S$--orbits first.   
Then the partition of $X$ by the fibers of the map $p$ is $S$--invariant.  
And $J/H$ is compact and K\"{a}hler and thus biholomorphic to a product $T \times Q$, where the flag 
manifold passing through each point is an $S$--orbit.    
As a consequence, the $S$--orbit in $X$ are all closed and  biholomorphic to $Q$ and we have 
a homogeneous fibration $G/H \to G/S\cdot H$.    
Since all holomorphic functions are constant on the fibers of this fibration, the holomorphic reduction 
factors through this bundle projection and the base $G/S\cdot H = R/R\cap H$ then fibers as $T$--bundle over 
the base of the holomorphic reduction $G/J$ and is K\"{a}hler \cite{Bla56}.   
As a consequence of Proposition \ref{solvcase}, $X = Q \times T \times (\mathbb C^*)^2$.    

\medskip   
Finally, suppose $G/J$ is an affine cone minus its vertex (or possibly $\mathbb C^*$) and thus $d_{G/J}=1$.  
Now by the Fibration Lemma the fiber $J/H$ satisfies $d_{J/H}=1$ and is K\"{a}hler.  
The classification given in \cite[Main Theorem]{GOR89} now applies.     
First assume ${\mathcal O}(J/H) = \mathbb C$.  
Then by \cite[Proposition 5]{GOR89} the normalizer fibration $J/H \to J/N$ realizes 
$J/H$ as a Cousin bundle over a flag manifold.  
By Proposition \ref{Cousinoverflag} there is a closed complex subgroup $I$ of $J$    
containing $H$ with $I/H = \mathbb C^*$ and $J/I =T \times Q$, a product of a torus and a flag manifold.   
The $S$--orbits in $G/I$ are sections and the torus bundle splits as a product $Y := T \times S/S \cap I$.  
Thus $X$ fibers as a $\mathbb C^*$--bundle over $T \times Y$.  
Example \ref{Qovercone} shows that $Y$ itself need not be a product.     
Next assume ${\mathcal O}(J/H) \not= \mathbb C$.  
Then by \cite[Proposition 3]{GOR89}      
there exists a closed complex subgroup $I$ of $J$ containing $H$ such that 
a finite covering of $J/H$ is biholomorphic to $I/H \times J/I$, 
where $I/H = T$ is a torus and $Z := J/I$ is an affine cone minus its vertex.   
Then the fibration $G/H \to G/I$ has $T$ as fiber and $S$ is transitive on its base and  
since $T$ is compact, there is a K\"{a}hler structure on $G/I$, see \cite{Bla56}.   
Now by the Fibration Lemma we have $d_{G/I} = 2$ and thus there exists a parabolic 
subgroup $P$ of $G$ containing $I$ such that $P/I$ is isomorphic to $(\mathbb C^*)^2$,  
see \cite{Akh83} or Theorem \ref{alggps}.  
But then $G/I$ is holomorphically separable ($G/I$ can be realized as a $\mathbb C^*$--bundle over an affine cone minus its vertex) 
and so $G/I$ is the base of the holomorphic reduction of $G/H$.    
But this contradicts the assumption that $G/J$ is the base of the holomorphic reduction of $G/H$.  
Note that it is not possible that $J=I$, since ${\mathcal O}(J/H) \not= \mathbb C$ implies $\dim J/I > 0$.  
This case does not occur.   

\medskip  
This completes the classification when ${\mathcal O}(X) \not= \mathbb C$.   
\end{proof}    

\section{Examples} 

We now give non--trivial examples that can occur in the classification.

\begin{example} \label{exCousin/flag}  
The manifolds that occur   
in Proposition \ref{Cousinoverflag} need not be biholomorphic to a product of an $S$--orbit times an orbit of the center.  
For $k=d_X=1$,  let $\chi : B \to \mathbb C^*$ be a non--trivial character, 
where $B$ is a Borel subgroup of $S := SL(2,\mathbb C)$.  
Let $C$ be a non--compact 2--dimensional Cousin group.  
Then $C$ fibers as a $\mathbb C^*$--bundle over an elliptic curve $T$ and let $B$ act on $C$ via the character $\chi$.  
Set $X := S \times_{B} C$.  
Then $X$ fibers as a principal $C$--bundle over $S/B$ and is K\"{a}hler, but neither this bundle 
nor the corresponding $\mathbb C^*$--bundle is trivial.   
\end{example}

\begin{example}   \label{ex1}  
Let $S := SL(3,\mathbb C)$ and    
\[        
     H \; := \; \left\{ \left( 
        \begin{matrix}* & 0 & * \\  0 & * & *  \\ 0 & 0 & * \end{matrix}  
      \right) \right\}  \quad \subset \quad B \quad \subset \quad   
   P \;  :=  \;  \left\{ \left(  \begin{matrix}*&*&*\\0&*&*\\0&*&*\end{matrix}      \right) \right\} ,   
\]  
where $B$ is the Borel subgroup of $S$ consisting of upper triangular matrices.    
Then $S/H \to S/B$ is an affine $\mathbb C$--bundle over the flag manifold $S/B$.  
Now consider the fibration $S/H \to S/P$.  
Its fiber is $P/H = \mathbb P_2 \setminus \{ \mbox{point} \}$ and all holomorphic functions on $S/H$ 
are constant along the fibers by Hartogs' Principle and so must come from the base $S/P = \mathbb P_2$.      
But the latter is compact and so ${\mathcal O}(S/P)=\mathbb C$ and, as a consequence, we see that  ${\mathcal O}(S/H) = \mathbb C$.   
Thus $S/H$ is an example of a space that can be the base of the normalizer fibration in the second paragraph of the proof of the Main 
Theorem in  \S 4.  
\end{example}     

\begin{example}  \label{ex2}  
The space $Y = \mathbb P_n \setminus \{ z_0 \}$ is an example for 
the base of the normalizer fibration in  the third paragraph of the proof of the Main Theorem in \S 4.    
For $n=2$ we have $Y = P/H$ with the groups given in Example \ref{ex1}.       
\end{example}  

\begin{example} \label{Qovercone}  
Consider the subgroups of $S := SL(5,\mathbb C)$ defined by 
\[ 
     H \; := \; \left\{ \left( 
        \begin{matrix}1 & * & * & * & *  \\  0 & * & * & * & *  \\ 0 & 0 & * & * & * \\ 0 & 0 & 0 & * & * \\ 0 & 0 & 0 & * & * \end{matrix}  \right) \right\}   
\quad \subset  \quad 
              P  \; := \; \left\{ \left( 
        \begin{matrix}* & * & * & * & *  \\  0 & * & * & * & *  \\ 0 & * & * & * & * \\ 0 & 0 & 0 & * & * \\ 0 & 0 & 0 & * & * \end{matrix}     \right) \right\}     
\]  
and   
\[            
   J  \; := \; P' \; = \; \left\{ \left( 
        \begin{matrix}1 & * & * & * & *  \\  0 & * & * & * & *  \\ 0 & * & * & * & * \\ 0 & 0 & 0 & * & * \\ 0 & 0 & 0 & * & * \end{matrix}   \right) \right\}        
\]     
Then $J/H = \mathbb P_1$ and $P/J = \mathbb C^*$ and $S/P = Q$ is a flag manifold that can be fibered as a ${\rm Gr}(2,4)$--bundle 
over $\mathbb P_4$.     
We have the fibrations   
\[ 
              S/H \; \stackrel{\mathbb P_1}{\longrightarrow} \; S/J  \; \stackrel{\mathbb C^*}{\longrightarrow} \; S/P \; = \; Q .   
\]     
Note that $S/J$ is holomorphically separable due to the fact that it can be equivariantly embedded as an affine cone minus its vertex in some projective space       
and since $J/H$ is compact, $S/J$ is the base of the holomorphic reduction of $S/H$.  
Because the fibration of $S/H$ is not trivial, this shows that the $S$--orbit that can occur in the proof in \S 5 need not split as a product.  
\end{example}   
   
\begin{example}  \label{exdoubleCstar/flag}   
One can take a parabolic subgroup similar to the one in the previous example so that its center has dimension two and 
create an example which fibers as a non--trivial flag manifold over a $(\mathbb C^*)^2$--bundle over a flag manifold.   
Since this is similar to the construction in Example \ref{Qovercone}, we leave the details to the reader.    
\end{example}

\end{document}